\documentclass[3p,times]{elsarticle}
\usepackage{amsmath,amssymb,amsthm}
\usepackage{xcolor}
\usepackage{graphicx}
\usepackage{hyperref}
\usepackage{booktabs}
\input diagxy

\hypersetup{colorlinks=true,linkcolor=blue,citecolor=blue,urlcolor=blue}

\theoremstyle{plain}
\newtheorem{theorem}{Theorem}[section]
\newtheorem{thm}[theorem]{Theorem}
\newtheorem{prop}[theorem]{Proposition}
\newtheorem{lem}[theorem]{Lemma}

\theoremstyle{definition}
\newtheorem{defn}[theorem]{Definition}
\newtheorem{rem}[theorem]{Remark}
\newtheorem{exmp}[theorem]{Example}

\newcommand{\eps}{\varepsilon}

\newcommand{\lra}{\to}
 \newcommand{\ra}{\rightarrow}
\newcommand{\Dt}{\Delta^{+}}

\newcommand{\TD}{T_{\!D}}

\newcommand{\TNM}{T^{\mathrm{nM}}}

\newcommand{\To}{T_{\! 1}}
\newcommand{\Tt}{T_{\! 2}}
\newcommand{\CE}{\mathcal{E}}
\newcommand{\M}{\mathbb{M}}
\newcommand{\I}{\mathbb{I}}
\newcommand{\lam}{\lambda}

\newcommand{\lan}{\langle}
\newcommand{\ran}{\rangle}

\newcommand{\Tot}{{\To\!\star\!\Tt}}
\newcommand{\DN}{{\TD\!\star\TNM\!}}

\begin{document}

\begin{frontmatter}
\title{Beyond left-continuity: t-norms on the fuzzy unit interval induced by pairs of t-norms}
\author[label1]{Rongjun Wu} 
\ead{wurj@swun.edu.cn, eugen\_woo@163.com}
\author[label1]{Jie Sun} 
\ead{jiesun1027@163.com}
\author[label2]{Hongliang Lai} 
\ead{hllai@scu.edu.cn}
\affiliation[label1]{organization={School of Mathematics,  Southwest Minzu University},
            city={Chengdu},
            postcode={610041}, 
            country={China}}
\affiliation[label2]{organization={School of Mathematics,  Sichuan University},
            city={Chengdu},
            postcode={610064}, 
            country={China}}
\cortext[cor1]{J. Sun is the corresponding author.}
\date{ }
 \begin{abstract}
We define a binary operation $T_1\star T_2$ on the fuzzy unit interval $\I$ by extending the natural action of $T_1$ and $T_2$ on step functions. 
When both $T_1$ and $T_2$ are left-continuous, this operation coincides with the tensor product of the corresponding quantales and therefore yields a t-norm on $\I$.  
Without left-continuity, the situation becomes more subtle. We prove that if at least one of the two t-norms is continuous, then $T_1\star T_2$ is still a t-norm on $\I$.  
We also show by counterexample that continuity cannot simply be dropped. The operation $\DN$ fails to be associative.  
On the other hand, $T_D\star T_D$ is a t-norm despite $T_D$ being neither continuous nor left-continuous, indicating that continuity is not a necessary condition. 
\end{abstract}

\begin{keyword}
triangular norm; fuzzy unit interval; type-2 fuzzy set; quantale; tensor product; drastic product; associativity
\end{keyword}
\end{frontmatter}

 \section{Introduction}

Triangular norms,  or t-norms, are associative, commutative, non-decreasing binary operations on the real unit interval $[0,1]$ with neutral element $1$.
They were introduced in 1963 by Schweizer and Sklar \cite{Schweizer1963,Schweizer2005} to formulate the triangle inequality in probabilistic metric spaces.
In fuzzy set theory, t-norms have been extensively used to model fuzzy intersections \cite{Alsina1980,Prade1980} and conjunction operators in fuzzy logic \cite{Dubois1980, Hajek1998}.

In 1967, Goguen introduced $L$-fuzzy sets \cite{Goguen1967}, where the codomain of a fuzzy set is a partially ordered set (poset) $L$.
Noting that the definition of a t-norm on $[0,1]$ can be naturally extended to any bounded poset, De Cooman and Kerre generalized the concept of t-norms to bounded posets \cite{Cooman1994}, thereby providing fuzzy intersection operators for Goguen's $L$-fuzzy sets.

Following Goguen's general framework, many different types of posets $L$ have been employed as codomains of fuzzy sets.
Among these, an important class consists of posets into which the unit interval $[0,1]$ can be embedded as a subset, because such posets allow a more precise description of concepts with subtle fuzzy features.
A prominent example is Zadeh's type-2 fuzzy sets \cite{Zadeh1975a,Zadeh1975b}, where the codomain is taken to be the set $\M = [0,1]^{[0,1]}$ of all fuzzy subsets of $[0,1]$.
To extend the usual order on $[0,1]$ to $\M$, two compatible partial orders $\sqsubseteq_{\sqcap}$ and $\sqsubseteq_{\sqcup}$ were introduced on $\M$, yielding the posets $(\M, \sqsubseteq_{\sqcap})$ and $(\M, \sqsubseteq_{\sqcup})$ \cite{Harding2016}.
Also in \cite{Harding2016},  the authors lifted t-norms on $[0,1]$ to binary operations on these two posets by using the meet operation $\wedge$ on the codomain $[0,1]$.

In fuzzy topology, Hutton introduced the fuzzy unit interval $\I(L)$ in 1975 as a lattice-theoretic extension of the real unit interval, where $L$ is a complete lattice \cite{Hutton1975}.
In the special case \(L = [0,1]\), the fuzzy unit interval can be characterized equivalently as the collection of all right-continuous non-increasing functions \(f\colon [0,1]\lra [0,1]\) satisfying \(f(1)=0\), which forms a subset of \(\mathbb{M}\). 
We denote this particular fuzzy unit interval by \(\mathbb{I}\).
Endowed with the pointwise order, \(\mathbb{I}\) is a subposet of \(\mathbb{M}\) equipped with the pointwise order. 

In fact, $\I$ enjoys even stronger structural properties.
The unit interval $[0,1]$ embeds into $\I$ as a complete sublattice, and $\I$ is precisely the tensor product of $[0,1]$ with itself in the category $\mathbf{SUP}$, whose objects are complete lattices and whose morphisms are supremum-preserving maps.
Consequently, t-norms on $\I$ can be constructed by forming the tensor product of two quantales with the same underlying set $[0,1]$ \cite{Eklund2018,Gutierrez2017,Gutierrez2019}.
Recall that $([0,1],T)$ is a quantale if and only if $T$ is left-continuous.
Thus, the quantale tensor product construction of a t-norm on $\I$ requires both input t-norms to be left-continuous.
This observation leads to a natural question: given two arbitrary t-norms $T_1$ and $T_2$ on $[0,1]$ that are not necessarily left-continuous, can one still construct a t-norm on $\I$ from them?

In this paper, we give sufficient conditions under which this construction yields a t-norm. 
We present a general method for constructing a binary operation $T_1\star T_2$ on $\I$ starting from two arbitrary t-norms $T_1$ and $T_2$ on $[0,1]$.
When both t-norms are left-continuous, this operation coincides with the quantale tensor product construction already known in the literature; the existence of the resulting unital quantale follows from \cite[Theorem~4.2]{Gutierrez2017}. Our proof in this case is a direct verification for the unit interval, rather than a new tensor-product construction.
We then prove that $T_1\star T_2$ is a t-norm on $\I$ whenever at least one of $T_1, T_2$ is continuous.
We also provide examples illustrating two complementary phenomena:
on the one hand, if neither component t-norm is continuous, $T_1\star T_2$ need not be associative and may therefore fail to be a t-norm on $\I$;
on the other hand, although the drastic product t-norm $T_D$ is neither continuous nor left-continuous, $T_D\star T_D$ is still a t-norm on $\I$.

The main associativity theorem goes beyond the case of two left-continuous components because the second component may fail left-continuity when the first is continuous. The pair $(T_D,T_D)$ shows that both components may fail left-continuity, whereas $(T_D,T^{\mathrm{nM}})$ shows that left-continuity of only one component does not suffice. These results provide sufficient conditions and contrasting examples, not a classification of all admissible pairs.

The paper is organized as follows.
Section~2 recalls basic notions concerning fuzzy unit intervals and t-norms on $[0,1]$.
In Section~3 we introduce the operation $T_1\star T_2$, establish its fundamental properties, and discuss its Archimedean-type properties.
Section~4 is devoted to proving our main result: if either $T_1$ or $T_2$ is continuous, then $T_1\star T_2$ is a t-norm on $\I$; two important examples are also discussed there.
Section~5 concludes the paper and outlines directions for future work.

\section{Preliminaries}\label{prel}

 Unless explicitly stated otherwise, $(P,\leq)$ denotes a bounded partially ordered set. Its bottom and top elements are denoted by $\bot$ and $\top$, respectively. We always assume that $\bot\neq\top$.

 Equipped with the usual order of real numbers, the unit interval $[0,1]$ is a bounded partially ordered set with $\bot=0$  and $\top=1$. It is also a complete lattice. For a subset $A\subseteq [0,1]$, we write $\sup A$ and $\inf A$ for the least upper bound and greatest lower bound of $A$, respectively. In particular, $\sup\emptyset=0$ and $\inf\emptyset=1$.

 Let $f:[0,1]\lra[0,1]$ be a non-increasing function. Define 
 $$f(t^-)=\inf\limits_{s<t}f(s), \quad f(t^+)=\sup\limits_{s>t}f(s)$$ 
 for all $t\in[0,1]$. In particular, 
 $$f(0^-)=\inf\emptyset=1, \quad f(1^+)=\sup\emptyset=0.$$

Hutton's fuzzy unit interval \cite{Hutton1975} is defined by taking a general complete lattice $L$ as the codomain of the fuzzy sets. In the present paper we restrict to the case $L=[0,1]$.  

\begin{defn} 
Let $\M^R$ be the set of all non-increasing maps $\lam:[0,1]\lra [0,1]$ with $\lam(0)=1$ and $\lam(1)=0$.
Let $\sim$ be an equivalence relation on $\M^R$ given by
$$
\lam\sim\mu\iff \forall t\in[0,1],\, 
\lam(t^-)=\mu(t^-),\, \lam(t^+) = \mu(t^+). 
$$
The quotient set $\M^R/\sim$ is called the fuzzy unit interval.
\end{defn}
 
We denote by \(\mathbb{I}\) the collection of all right-continuous non-increasing maps \(f\colon [0,1]\to [0,1]\) satisfying \(f(1)=0\). Then a map \(f\colon [0,1]\to [0,1]\) belongs to \(\mathbb{I}\) if and only if
\(f(\inf A) = \sup f(A)\) for all \(A\subseteq [0,1]\).

 \begin{prop}
There is a bijection between the set $\I$ and the quotient set $\M^R/\sim$.
\end{prop}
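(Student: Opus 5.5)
We will construct explicit maps in both directions and check that they are mutually inverse. Given $\lam\in\M^R$, define $\Phi(\lam)\colon[0,1]\lra[0,1]$ by $\Phi(\lam)(t)=\lam(t^+)$ for all $t\in[0,1]$. In the other direction, send $f\in\I$ to the class of $\Psi(f)$, where $\Psi(f)(t)=f(t)$ for $0<t\le 1$ and $\Psi(f)(0)=1$. The proof then consists of four steps: checking that both maps land in the right sets, checking that $\Phi$ respects $\sim$, and checking that the two composites are identities.

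\emph{Step 1 (well-definedness).} Since $\lam$ is non-increasing, $t\mapsto\lam(t^+)$ is non-increasing. It is right-continuous because
\[
\inf_{s>t}\lam(s^+)=\inf_{s>t}\sup_{r>s}\lam(r)=\sup_{r>t}\lam(r)=\lam(t^+),
\]
and we have $\Phi(\lam)(1)=\lam(1^+)=\sup\emptyset=0$. Hence $\Phi(\lam)\in\I$. Moreover $\Phi(\lam)$ depends only on the one-sided limits of $\lam$, so $\Phi$ descends to a map $\bar\Phi\colon\M^R/\sim\;\lra\I$. Conversely, $\Psi(f)$ is non-increasing, satisfies $\Psi(f)(0)=1$ and $\Psi(f)(1)=f(1)=0$, and so lies in $\M^R$.

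\emph{Step 2 ($\bar\Phi\circ[\Psi]=\mathrm{id}_{\I}$).} For $t<1$, right-continuity of $f$ gives $\Psi(f)(t^+)=\sup_{s>t}f(s)=f(t)$; here $f(t^+)=f(t)$ because $f(t)=f(\inf\{s:s>t\})=\sup_{s>t}f(s)$. For $t=1$ both sides equal $0$. Thus $\Phi(\Psi(f))=f$.

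\emph{Step 3 ($[\Psi]\circ\bar\Phi=\mathrm{id}$ on the quotient).} This is the main obstacle, since the left limits must also be matched. We must show $\Psi(\Phi(\lam))\sim\lam$, i.e.\ that $g=\Psi(\Phi(\lam))$ satisfies $g(t^-)=\lam(t^-)$ and $g(t^+)=\lam(t^+)$ for every $t$.
\begin{itemize}
\item For right limits with $t<1$: $g(t^+)=\sup_{s>t}\lam(s^+)=\lam(t^+)$, since $\lam(s^+)$ is squeezed between values $\lam(r)$ with $r>t$. At $t=1$ both right limits equal $\sup\emptyset=0$.
\item For left limits at $t=0$: both equal $\inf\emptyset=1$, so the special value at $0$ does not matter.
\item For left limits with $t>0$: $g(t^-)=\inf_{s<t}g(s)$, and we may restrict to $s\in(0,t)$ because $g$ is non-increasing. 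Then $\inf_{0<s<t}\lam(s^+)=\inf_{s<t}\lam(s)$, using the interleaving $\lam(s')\ge\lam(s^+)\ge\lam(s'')$ for $s'<s<s''<t$.
\end{itemize}
Thus $g\sim\lam$.

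Steps 2 and 3 together show that $\bar\Phi$ is a bijection with inverse $f\mapsto[\Psi(f)]$. The only delicate bookkeeping is at the endpoints $0$ and $1$, which is handled by the conventions $\inf\emptyset=1$ and $\sup\emptyset=0$.
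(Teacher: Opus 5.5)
Your proof is correct in substance, with one slip in Step 1 noted below. It uses the same map $[\lambda]\mapsto\lambda(\cdot^+)$ and the same surjectivity witness as the paper (your $\Psi(f)$ is the paper's $\tilde\lambda$), but you obtain injectivity differently.

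The paper argues by contraposition. If $[\lambda]\neq[\mu]$, then either the right limits differ at some point, which is immediate, or $\lambda(t_0^-)<\mu(t_0^-)$ for some $t_0$. In the second case a midpoint value and a choice of $\delta$ produce points in $(t_0-\delta,t_0)$ where $\lambda(t^+)<\mu(t^+)$.

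You instead show that $f\mapsto[\Psi(f)]$ is also a left inverse. This amounts to the reconstruction formula $\lambda(t^-)=\inf_{0<s<t}\lambda(s^+)$ for $t>0$, plus the trivial endpoint cases. Both arguments rest on the same fact: the left limits of $\lambda$ are determined by its right-limit function. Your version states this as an explicit formula, avoids the case split and the $\delta$-choice, and exhibits the inverse map directly. The paper's version never has to verify $\sim$ for a constructed representative.

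The slip: the right-continuity computation in Step 1 is wrong as written. For a non-increasing function the right limit at $t$ is $\sup_{s>t}$, not $\inf_{s>t}$. Moreover, the identity $\inf_{s>t}\sup_{r>s}\lambda(r)=\sup_{r>t}\lambda(r)$ is false in general: for $\lambda(x)=1-x$ the left side is $0$ and the right side is $1-t$. The correct line, for $t<1$, is $\sup_{s>t}\lambda(s^+)=\sup_{s>t}\sup_{r>s}\lambda(r)=\sup_{r>t}\lambda(r)=\lambda(t^+)$. This is exactly the computation you carry out correctly in the first bullet of Step 3. With that replacement the proof is complete.
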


\begin{proof} Given an element $\lam\in \M^R$, let $\bar{\lam}(t)=\lam(t^+)$ for all $t\in[0,1]$. Then $\bar{\lam}\in\I$. 
We first show that the map $[\lambda] \mapsto \bar{\lambda}$ is well-defined from $\M^R/\sim$ to $\I$. By the definition of the equivalence relation $\sim$ on $\M^{R}$, if $\lambda \sim \mu$, then clearly $\bar{\lambda} = \bar{\mu}$. Thus, the assignment is independent of the choice of representatives.

  To prove injectivity, assume $[\lambda] \neq [\mu]$. By the definition of $\sim$, either $\lambda(t_0^{-}) \neq \mu(t_0^{-})$ or $\lambda(t_0^{+}) \neq \mu(t_0^{+})$ for some $t_0 \in [0,1]$. We consider two cases:
  \begin{enumerate}[(i)]
    \item If $\lambda(t_0^{+}) \neq \mu(t_0^{+})$ for some $t_0$, then clearly $\bar{\lambda} \neq \bar{\mu}$.
    \item If $\lambda(t_0^{-}) \neq \mu(t_0^{-})$ for some $t_0$, without loss of generality assume $\lambda(t_0^{-}) < \mu(t_0^{-})$. Choose $\delta > 0$ such that for all $t \in (t_0 - \delta, t_0)$,
    $$
    \lambda(t) < \frac{\lambda(t_0^{-}) + \mu(t_0^{-})}{2} < \mu(t).
    $$
 For all $t \in (t_0 - \delta, t_0)$,  since  $\mu(t)$ is  greater than  the constant  $\frac{\lambda(t_0^{-}) + \mu(t_0^{-})}{2} $,  we have  $\frac{\lambda(t_0^{-}) + \mu(t_0^{-})}{2} \leq \mu(t^{+}).$
    Consequently,  for all $t \in (t_0 - \delta, t_0)$, 
    $$
    \lambda(t^{+}) \leq \lambda(t) < \frac{\lambda(t_0^{-}) + \mu(t_0^{-})}{2} \leq \mu(t^{+}),
    $$
    which implies $\bar{\lambda} \neq \bar{\mu}$.
  \end{enumerate}
  Thus, the map is injective.

  For surjectivity, let $\lambda \in \I$ be arbitrary. Define $\tilde{\lambda} \in \M^R$ by
  $\tilde{\lambda}(t) = 
  \begin{cases}
    \lambda(t), & t > 0, \\
    1, & t = 0.
  \end{cases}
  $
  Then $\bar{\tilde{\lambda}} = \lambda$, showing that the map is also surjective.

  Therefore, the assignment $[\lambda] \mapsto \bar{\lambda}$ is a bijection as desired.
\end{proof}

\begin{defn}\label{def:adjoint}
For $f\in\I$, define its \emph{adjoint} by
\[
 f^*(y)=\inf\{x\in[0,1]:f(x)\le y\},\qquad y\in[0,1].
\]
The set under the infimum is nonempty because $f(1)=0$.
\end{defn}

\begin{lem}\label{lem:adjoint}
For $f\in\I$, the map $f^*$ is the unique map satisfying
\[
 f(x)\le y\quad\Longleftrightarrow\quad f^*(y)\le x
 \qquad(x,y\in[0,1]).
\]
Moreover, $f^*\in\I$, $(f^*)^*=f$, and
$f\le g$ if and only if $f^*\le g^*$.
\end{lem}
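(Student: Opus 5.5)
The plan is to first identify the set $U_y=\{x\in[0,1]: f(x)\le y\}$ as a closed up-interval $[f^*(y),1]$, and then derive every other claim formally from the resulting Galois-type equivalence together with the characterization of $\I$ recalled above: $f(\inf A)=\sup f(A)$ for all $A\subseteq[0,1]$.

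\textbf{The equivalence.} Fix $y$. The set $U_y$ is nonempty, since $f(1)=0$, and it is an up-set, since $f$ is non-increasing. It is also closed under infima: applying $f(\inf A)=\sup f(A)$ with $A=U_y$ gives $f(\inf U_y)=\sup f(U_y)\le y$. Hence $f^*(y)=\inf U_y$ lies in $U_y$, and $U_y=[f^*(y),1]$. This is exactly the statement $f(x)\le y\iff f^*(y)\le x$.

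\textbf{Uniqueness.} Suppose $h$ satisfies the same equivalence. Then for each $y$, $h(y)\le x$ holds iff $f^*(y)\le x$, for every $x$. Taking $x=h(y)$ and then $x=f^*(y)$ gives $h(y)=f^*(y)$.

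\textbf{Membership $f^*\in\I$.} I will use the characterization of $\I$ again rather than checking monotonicity and right-continuity separately. For $B\subseteq[0,1]$ and any $x$, the equivalence gives the chain
\[
\sup f^*(B)\le x \iff \forall b\in B,\ f(x)\le b \iff f(x)\le \inf B \iff f^*(\inf B)\le x .
\]
Since this holds for every $x$, we get $f^*(\inf B)=\sup f^*(B)$, so $f^*\in\I$. For instance, $B=\emptyset$ gives $f^*(1)=0$.

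\textbf{The identity $(f^*)^*=f$.} Because $f^*\in\I$, the first part applies to $f^*$. So $(f^*)^*$ is the unique map satisfying
\[
f^*(y)\le x\iff (f^*)^*(x)\le y .
\]
By the equivalence already proved, $f^*(y)\le x$ holds iff $f(x)\le y$. Hence $f$ itself satisfies the defining property of $(f^*)^*$, with the roles of the variables swapped, and uniqueness gives $(f^*)^*=f$.

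\textbf{Monotonicity.} If $f\le g$ pointwise, then $g(x)\le y$ implies $f(x)\le y$, so $\{x: g(x)\le y\}\subseteq\{x: f(x)\le y\}$. Taking infima gives $f^*(y)\le g^*(y)$; this is the stated direction, order-preserving rather than reversing. For the converse, apply the same implication to $f^*,g^*\in\I$ to get $(f^*)^*\le(g^*)^*$, and then use $(f^*)^*=f$ and $(g^*)^*=g$.

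The only step with genuine content is showing that the infimum defining $f^*(y)$ is attained. This is where right-continuity enters, through $f(\inf A)=\sup f(A)$. Everything after that is formal manipulation of the equivalence, so I expect no real obstacle beyond careful bookkeeping of which variable plays which role in the uniqueness argument for $(f^*)^*=f$.
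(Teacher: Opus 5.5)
Your proof is correct and follows essentially the same route as the paper: you identify $\{x: f(x)\le y\}$ as $[f^*(y),1]$ using $f(\inf A)=\sup f(A)$, you derive $f^*(\inf B)=\sup f^*(B)$ from the same chain of equivalences, and you obtain involutivity and monotonicity in the same way. The only difference is that you write out the uniqueness argument and the symmetry step giving $(f^*)^*=f$, which the paper leaves implicit.
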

\begin{proof}
Fix $y$ and put $A_y=\{x:f(x)\le y\}$.
Since $f$ takes arbitrary infima to suprema,
$f(\inf A_y)=\sup f(A_y)\le y$.
Thus $A_y=[f^*(y),1]$, proving the equivalence and uniqueness.
For every $B\subseteq[0,1]$ and $x\in[0,1]$, this equivalence gives
\[
\begin{aligned}
 f^*(\inf B)\le x
 &\Longleftrightarrow f(x)\le\inf B\\
 &\Longleftrightarrow (\forall b\in B)\ f^*(b)\le x\\
 &\Longleftrightarrow \sup_{b\in B}f^*(b)\le x.
\end{aligned}
\]
Consequently $f^*(\inf B)=\sup_{b\in B}f^*(b)$, including
$B=\varnothing$, and therefore $f^*\in\I$.
The symmetry of the adjunction equivalence gives $(f^*)^*=f$.
If $f\le g$, then $\{x:g(x)\le y\}\subseteq\{x:f(x)\le y\}$,
so $f^*\le g^*$. Applying this implication to $f^*,g^*$ proves
the converse.
\end{proof}
Thus $f\mapsto f^*$ is an order-preserving involution, hence a
complete lattice automorphism.

Step functions play a fundamental role in the structure of $\I$. For $r,a\in[0,1]$, define the \emph{step function} $\lan r,a\ran:[0,1]\lra[0,1]$ by
$$
\lan r,a\ran(x)=
\begin{cases}
a,& 0\le x<r,\\
0,& r\le x\le 1.
\end{cases}
$$

Equipped with the pointwise order, $\I$ is a complete lattice whose top and bottom elements are $\lan 1,1\ran$ and $\lan 0,0\ran$, respectively. Suprema are calculated pointwise: for every family $(f_i)_{i\in J}\subseteq\I$,
\[
 \left(\sup_{i\in J}f_i\right)(x)=\sup_{i\in J}f_i(x).
\]
Indeed, the right-hand side is non-increasing, vanishes at $1$, and satisfies, for $x<1$,
\[
 \sup_{y>x}\sup_{i\in J}f_i(y)
 =\sup_{i\in J}\sup_{y>x}f_i(y)
 =\sup_{i\in J}f_i(x).
\]
It therefore belongs to $\I$ and is the least upper bound. This also covers the empty family.
Let
$$
\CE:=\{\lan r,a\ran\mid r,a\in[0,1]\}.
$$
The following lemma shows that $\CE$ is sup-dense in $\I$.

\begin{lem}\label{lem:step-supdense} 

(1) For all $f\in\I$ and all $\lan s,a\ran\in\CE$, it holds that 
\begin{equation}\label{eq:step-order-char}
\lan s,a\ran\le f
\iff
a\le f(s^-).
\end{equation}

(2)
For each $f\in\I$, it holds that 
\begin{equation}
f=\sup_{r\in[0,1]}\lan r,f(r)\ran
=\sup_{r\in(0,1]}\lan r,f(r^-)\ran.
\end{equation}
Consequently, for every $x\in[0,1]$,
$$
f(x)=\sup_{x<r}f(r^-).
$$

\end{lem}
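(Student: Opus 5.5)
For part (1), I will compute pointwise. The step function $\lan s,a\ran$ takes the value $a$ on $[0,s)$ and $0$ on $[s,1]$. So $\lan s,a\ran\le f$ holds exactly when $a\le f(x)$ for every $x<s$, because the inequality is automatic on $[s,1]$. The condition ``$a\le f(x)$ for all $x<s$'' says $a\le\inf_{x<s}f(x)=f(s^-)$. For $s=0$ the condition is vacuous, and consistently $f(0^-)=\inf\emptyset=1\ge a$. No continuity of $f$ is needed for this part; only the definition of $f(s^-)$ is used.

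For part (2), I will show that both suprema equal $f$, using two inequalities. Since $f$ is non-increasing, $f(r)\le f(r^-)$ for every $r$. Hence part (1) gives $\lan r,f(r)\ran\le\lan r,f(r^-)\ran\le f$ for every $r$; when $r=0$ the step $\lan 0,\cdot\ran$ is the zero function. Consequently both suprema are $\le f$, and the first is $\le$ the second. For the reverse inequality it suffices to show $f\le\sup_{r\in[0,1]}\lan r,f(r)\ran$, since suprema in $\I$ are pointwise. Fix $x<1$ and any $r>x$. Then $\lan r,f(r)\ran(x)=f(r)$, so the supremum at $x$ is at least $\sup_{r>x}f(r)=f(x^+)$. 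This equals $f(x)$ by right-continuity; equivalently, by the characterization $f(\inf A)=\sup f(A)$ applied to $A=(x,1]$. At $x=1$ both sides are $0$. This chain of inequalities proves both equalities in (2).

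For the final formula, I will evaluate the second supremum pointwise at $x$. Note $\lan r,f(r^-)\ran(x)$ equals $f(r^-)$ if $x<r$ and $0$ otherwise. So $f(x)=\sup_{r\in(0,1],\,r>x}f(r^-)=\sup_{x<r}f(r^-)$, where for $x=1$ both sides are $\sup\emptyset=0$. The only delicate point is the bookkeeping at the endpoints $r=0$, $x=1$ and the conventions $\inf\emptyset=1$, $\sup\emptyset=0$. The real content is the right-continuity step, which is exactly the defining property of $\I$.
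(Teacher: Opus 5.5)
Your proposal is correct and follows essentially the same route as the paper. Part (1) unfolds the pointwise inequality over $x<s$, and part (2) combines $f(x)=\sup_{r>x}f(r)$ (right-continuity) with the sandwich $\lan r,f(r)\ran\le\lan r,f(r^-)\ran\le f$ from (1), then evaluates pointwise for the final formula; your explicit endpoint bookkeeping ($r=0$, $x=1$) just makes precise what the paper leaves implicit.
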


\begin{proof}
(1) 
We have 
\begin{align*}
\lan s,a\ran\leq f&\iff \forall  x<s, \lan s,a\ran(x)\leq f(x)\\
&\iff \forall x<s, a\leq f(x)\\
&\iff a\leq f(s^-).
\end{align*}

(2) First, for every $x\in[0,1]$, we have 
\begin{align*}
f(x)&=\sup_{r>x}f(r)\quad\quad  (\text{since }\forall A\subseteq [0,1],f(\inf A)=\sup f(A))\\
&=\sup_{r>x}\lan r,f(r)\ran (x) \quad\quad (\text{by the definition of } \lan r,f(r)\ran)\\
&=\sup_{r\in[0,1]}\lan r,f(r)\ran (x).\quad\quad (\text{since }\lan r,f(r)\ran (x)=0 \text{ if } r\leq x)
\end{align*}
Hence, $$f=\sup\limits_{r\in[0,1]}\lan r,f(r)\ran.$$

Second, notice that $f(r)\leq f(r^-)$ since $f$ is non-increasing. By (1), it holds that $$\lan r,f(r)\ran\leq \lan r,f(r^-)\ran\leq f.$$   Hence,  
$$f=\sup_{r\in [0,1]}\lan r,f(r)\ran\le\sup_{r\in(0,1]}\lan r,f(r^-)\ran\le f,$$  so \(f=\sup_{r\in(0,1]}\lan r,f(r^-)\ran\). By the definition of step function,    $$\lan r,f(r^-)\ran (x)=\begin{cases}0, & x\ge r,\\
f(r^-), &  x<r.
\end{cases}$$ Thus, it holds that
$$f(x)=\sup_{r\in(0,1]}\lan r,f(r^-)\ran (x)=\sup_{x<r}\lan r,f(r^-)\ran (x)=\sup_{x<r}f(r^-).$$
\end{proof}

\begin{rem}\label{rem:unibimor}
A map from the product of two complete lattices to a complete
lattice is a \emph{bimorphism} if it
preserves arbitrary suprema in each variable separately, including
the empty supremum. The map
\[
 \eps:[0,1]\times[0,1]\longrightarrow\I,
 \qquad \eps(r,a)=\lan r,a\ran,
\]
is a universal bimorphism. To see this directly, the pointwise
join formula shows that $\eps$ preserves arbitrary suprema in
each variable. Given a bimorphism
$\varphi:[0,1]\times[0,1]\to L$, where $L$ is a complete lattice,
define
\[
 \bar\varphi(f)=\sup_{r\in[0,1]}\varphi(r,f(r)).
\]
For any family $(f_i)_{i\in J}$, preservation of suprema by
$\varphi$ in its second variable gives
\[
 \bar\varphi\left(\sup_i f_i\right)
 =\sup_r\varphi\left(r,\sup_i f_i(r)\right)
 =\sup_i\sup_r\varphi(r,f_i(r))
 =\sup_i\bar\varphi(f_i).
\]
Moreover, $\varphi(r,0)=\bot_L$ and preservation in the first
variable yield
\[
 \bar\varphi(\lan s,a\ran)
 =\sup_{r<s}\varphi(r,a)=\varphi(s,a),
\]
where the case $s=0$ uses the empty supremum.
Hence $\bar\varphi\circ\eps=\varphi$.
Uniqueness follows from
$f=\sup_r\lan r,f(r)\ran$ and preservation of suprema.
This proves that $\I$, together with $\eps$, realizes the tensor
product of $[0,1]$ with itself in $\mathbf{SUP}$
\cite{Banaschewski1976}.
Let $b:[0,1]\times[0,1]\to[0,1]\otimes[0,1]$ be the canonical universal bimorphism in a standard tensor-product model, such as those described via Galois connections or lower sets in \cite{Gutierrez2017,Shmuely1974}. The universal properties give a unique complete lattice isomorphism
\[
 U:[0,1]\otimes[0,1]\longrightarrow\I,
 \qquad U(b(r,a))=\lan r,a\ran.
\]
Thus $\eps=U\circ b$: our map is the canonical universal bimorphism expressed in the step-function realization of the same tensor product.
The map $\eps$ is not injective, since
$\lan r,0\ran=\lan0,a\ran=\bot$ for all $r,a$.
Thus $\eps$ is not an embedding of the product poset $[0,1]\times[0,1]$.
\end{rem}

\begin{rem}\label{IMD}
Since there exists a bijection between the set $\M^R/\sim$ and $\I$, we often take $\I$ to represent the fuzzy unit interval. In fact, adopting the set $\I$ to denote the fuzzy unit interval offers some advantages. Specifically, it allows us to more explicitly reveal its close relationships with the following two mathematical objects.  

\begin{enumerate}[(1)]
\item $\I$ is a subset of the codomain of type-2 fuzzy sets. Recall from \cite{Harding2016,Zadeh1975a,Zadeh1975b} that the set $\M=[0,1]^{[0,1]}$, consisting of all fuzzy subsets of $[0,1]$, is the codomain of type-2 fuzzy sets  introduced by  Zadeh.  Clearly, $\I$ is a subset of $\M$. 
Moreover, equipped with the pointwise order on $\I$, the inclusion map $\I\hookrightarrow \M$ is also an order embedding with respect to both  orders $\sqsubseteq_{\sqcap}$ and $\sqsubseteq_{\sqcup}$. 

\item $\I$ is isomorphic to $\Dt$. Recall from \cite{Schweizer2005} that  $\Dt$ is the set of all distance distribution functions. That is, for all $F\in\Dt$, $F$ is a non-decreasing map from $[0,\infty]$ to $[0,1]$ such that   $F(0)=0$, $F(\infty)=1$ and $F$  is left-continuous on  $(0,\infty)$. Notice that $x\mapsto -\ln x$ is an isomorphism between $([0,1],\leq)$ and $([0,\infty],\geq)$. This order isomorphism gives rise to an isomorphism $$\Phi:\Dt\lra\I,\quad   
 \Phi(F)(x)=\begin{cases}F(-\ln x),& 0<x\leq 1,\\
                         F(\infty^-), &   x=0.
            \end{cases}
$$
\end{enumerate}
\end{rem}

\begin{defn}\cite{Cooman1994}
Let $(P,\leq)$ be a bounded partially ordered set. A \emph{triangular norm}, or \emph{t-norm}, on $(P,\leq)$ is a binary operation
$
T:P\times P\lra P
$
satisfying the following conditions: 
\begin{enumerate}[(T1)] 
\item  $\forall x\in P, T(x,\top)=T(\top,x)=x$,
\item  $\forall x,y\in P, T(x,y)=T(y,x)$,
\item  $\forall x,y,z\in P, T(T(x,y),z)=T(x,T(y,z))$,
\item  $\forall x,x',y,y'\in P, x\le x', y\le y'\implies T(x,y)\le T(x',y')$.
\end{enumerate}
\end{defn}
There are some important basic t-norms on the interval $[0,1]$; see, e.g., \cite{Alsina2006,Klement2000}. We list them below:
\begin{itemize}
\item minimum: $T_M(x,y)=\min\{x,y\}$,
\item product: $T_P(x,y)=x\cdot y$,
\item {\L}ukasiewicz t-norm: $T_{L}(x,y)=\max\{x+y-1,0\}$,
\item drastic product: $
               \TD(x,y)=
               \begin{cases}
               \min\{x,y\}, & \max\{x,y\}=1,\\
               0, & \max\{x,y\}<1,
              \end{cases}
                   $
\item the nilpotent minimum t-norm:
$
\TNM(x,y)=
\begin{cases}
\min\{x,y\}, & x+y>1,\\
0, & x+y\le 1.
\end{cases}
$
\end{itemize}

The t-norms $T_M$, $T_P$ and $T_L$ are continuous. The nilpotent minimum t-norm $\TNM$ is left-continuous but not continuous, whereas the drastic product $T_D$ is not left-continuous. 
  
\section{Operations on the fuzzy unit interval induced by pairs of t-norms}\label{sec3}

Given t-norms $\To$ and $\Tt$ on $[0,1]$, we obtain a binary operation
\begin{equation}\label{times}\Tot:\CE\times \CE\lra \CE,\quad \Tot(\lan r,a\ran,\lan s,b\ran )=\lan \To(r,s),\Tt(a,b)\ran.\end{equation}
This prescription is independent of the representation of a step
function. Indeed, every nonzero step has unique parameters
$r,a>0$, whereas $\lan r,a\ran=\bot$ precisely when $r=0$ or
$a=0$. Since every t-norm satisfies $T(0,t)=0$, a zero input
always gives the zero step, independently of its parameters.
For nonzero steps, $\lan r,a\ran\le\lan s,b\ran$ is equivalent
to $r\le s$ and $a\le b$. Consequently the operation on $\CE$
is monotone. Its commutativity, associativity, and neutral element
$\lan1,1\ran$ follow from the corresponding properties of
$T_1,T_2$. Since $\CE$ is a sup-dense subset of $\I$, the operation $\Tot$ can be naturally extended to $\I$ as follows:
\begin{align*}
\Tot(f,g)
&=\sup\{\Tot(\lan s,a\ran,\lan t,b\ran)\mid \lan s,a\ran\leq f,\ \lan t,b\ran\leq g\}\\
&=\sup\{\lan \To(s,t),\Tt(a,b)\ran \mid \lan s,a\ran\leq f,\ \lan t,b\ran\leq g\}.
\end{align*}

By monotonicity, when both inputs are steps, every term in the supremum is bounded by their product in \eqref{times}, and that product itself is an admissible term. Thus the extension agrees with \eqref{times} on $\CE$.

\begin{prop} \label{tens C}
Let $\To$ and $\Tt$ be t-norms on $[0,1]$. Then  
\begin{enumerate}[(1)]
\item $\Tot(f,g)=\Tot(g,f)$;
\item If $f_1\le f_2$ and $g_1\le g_2$, then $\Tot(f_1,g_1)\le{\Tot}(f_2,g_2)$;
\item ${\Tot}(f,\lan 1,1\ran)=f$.
\end{enumerate}
\end{prop}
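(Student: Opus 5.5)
All three parts will be read off the defining supremum
\[
\Tot(f,g)=\sup\{\lan \To(s,t),\Tt(a,b)\ran \mid \lan s,a\ran\le f,\ \lan t,b\ran\le g\},
\]
using the step-order characterization \eqref{eq:step-order-char} of Lemma~\ref{lem:step-supdense}(1) and the sup-density of $\CE$.

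For (1) we use commutativity of $\To$ and $\Tt$. Swapping the roles of $f$ and $g$ exchanges the pairs $(\lan s,a\ran,\lan t,b\ran)$ with $(\lan t,b\ran,\lan s,a\ran)$. Since $\lan\To(s,t),\Tt(a,b)\ran=\lan\To(t,s),\Tt(b,a)\ran$, the two index sets yield the same family of steps, so the suprema coincide.

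For (2), note that $f_1\le f_2$ gives $\{\lan s,a\ran\le f_1\}\subseteq\{\lan s,a\ran\le f_2\}$, and similarly for $g_1\le g_2$. The supremum defining $\Tot(f_1,g_1)$ is therefore taken over a subfamily of the one defining $\Tot(f_2,g_2)$, which gives the inequality. Monotonicity of the t-norms is not even needed here.

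For (3) we prove two inequalities.

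\emph{Lower bound.} Take $\lan t,b\ran=\lan1,1\ran\le\lan1,1\ran$. Then every step $\lan s,a\ran\le f$ contributes the term $\lan\To(s,1),\Tt(a,1)\ran=\lan s,a\ran$. Hence $\Tot(f,\lan1,1\ran)\ge\sup\{\lan s,a\ran\le f\}$. By Lemma~\ref{lem:step-supdense}(2) this supremum equals $f$, since $\lan r,f(r)\ran\le f$ for all $r$.

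\emph{Upper bound.} Every term satisfies $\lan\To(s,t),\Tt(a,b)\ran\le\lan s,a\ran$. This holds because $\To(s,t)\le s$ and $\Tt(a,b)\le a$ (every t-norm lies below the minimum), and steps are monotone in both parameters: directly from the definition, $\lan r,c\ran(x)\le\lan r',c'\ran(x)$ whenever $r\le r'$ and $c\le c'$, including the degenerate zero cases. As $\lan s,a\ran\le f$, every term is $\le f$, so the supremum is $\le f$.

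There is no real obstacle here. The only point needing care is the degenerate case where steps with $r=0$ or $a=0$ all equal $\bot$. The pointwise monotonicity of $\lan r,c\ran$ in $(r,c)$ covers this case without treating it separately.
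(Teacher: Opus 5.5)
Your proposal is correct and follows essentially the same route as the paper. Parts (1) and (2) are read directly off the defining supremum, and in (3) your two bounds together amount to the paper's reduction of the supremum to the terms with $\lan t,b\ran=\lan 1,1\ran$ (using $\To(s,t)\le s$ and $\Tt(a,b)\le a$), followed by Lemma~\ref{lem:step-supdense}(2), where you use its $\lan r,f(r)\ran$ form and the paper uses the $\lan r,f(r^-)\ran$ form.
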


\begin{proof}
 
(1) Since both $T_1$ and $T_2$ are commutative, the commutativity of $\Tot$ follows directly from its definition. 
   
(2) Assume that $f_1\le f_2$. 
 For each step function $\lan r,a\ran\in\CE$, if $\lan r,a\ran\leq f_1$ then $\lan r,a\ran\leq f_2$. 
Therefore, it follows that
$$
\Tot(f_1,g_1)\le \Tot(f_2,g_2)
$$
by the definition of $\Tot$ straightforwardly.

(3)Since $\lan 1,1\ran$ is the top in $\I$, we have
\begin{align*}
\Tot(f,\lan 1,1\ran)
&=\sup\{\lan \To(r,t),\Tt(a,b)\ran
\mid\lan r,a\ran\le f,\ \lan t,b\ran\leq\lan 1,1\ran\}\\
&=\sup\{\lan \To(r,1),\Tt(a,1)\ran
\mid \lan r,a\ran\leq f\}\\
&=\sup\{\lan r,a\ran
\mid a\le f(r^-)\}\\
&=\sup_{r\in(0,1]}\lan r,f(r^-)\ran\\
&=f.
\end{align*}
The second equality follows from $t,b\le1$, the monotonicity of $\To,\Tt$, and the fact that $t=b=1$ is admissible. The last equality is Lemma~\ref{lem:step-supdense}(2).
\end{proof}

\begin{prop}\label{tens A}
Let $\To$ and $\Tt$ be t-norms on $[0,1]$. Then, for all $f,g\in\I$,
$$
(\Tot(f,g))^*=\Tt\star\To(f^*,g^*).
$$
Consequently, $f\mapsto f^*$ is an order isomorphism $(\I,\Tot)\cong(\I,\Tt\star\To)$ that intertwines the two operations.
\end{prop}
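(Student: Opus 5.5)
The plan is to reduce everything to step functions. The adjoint $f\mapsto f^*$ is a complete lattice automorphism of $\I$ (the remark after Lemma~\ref{lem:adjoint}), so it commutes with the supremum defining $\Tot$. It therefore suffices to understand how $^*$ acts on a single step $\lan r,a\ran$, and then to match the index sets of the two suprema.

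\emph{Step 1: the adjoint of a step.} I claim that for all $r,a\in[0,1]$,
\[
\lan r,a\ran^*=\lan a,r\ran .
\]
First take $r,a>0$. Directly from the definition, $\lan r,a\ran(x)\le y$ holds iff $x\ge r$ or $a\le y$. Hence
\[
\lan r,a\ran^*(y)=\inf\{x:\lan r,a\ran(x)\le y\}
\]
equals $0$ when $y\ge a$ and equals $r$ when $y<a$, which is exactly $\lan a,r\ran(y)$. Now suppose $r=0$ or $a=0$. Then $\lan r,a\ran=\bot=\lan0,0\ran$, and its adjoint is $\bot$, because $\bot(x)\le y$ for every $x$, so the infimum is $0$. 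Likewise $\lan a,r\ran=\bot$ in this case, so the formula holds for all parameters.

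\emph{Step 2: matching the index sets.} Since $^*$ is an order isomorphism and an involution, Step~1 gives, for all parameters,
\[
\lan s,a\ran\le f\iff \lan s,a\ran^*\le f^*\iff \lan a,s\ran\le f^*.
\]
Thus the pairs $(s,a)$ admissible for $f$ correspond, via the swap $(s,a)\mapsto(a,s)$, to the pairs admissible for $f^*$, and similarly for $g$ and $g^*$.

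\emph{Step 3: the computation.} Using preservation of arbitrary suprema by $^*$, then Step~1 applied to the step $\lan \To(s,t),\Tt(a,b)\ran$, and then Step~2 to re-index, we get
\[
\begin{aligned}
(\Tot(f,g))^*
&=\sup\{\lan \To(s,t),\Tt(a,b)\ran^*\mid \lan s,a\ran\le f,\ \lan t,b\ran\le g\}\\
&=\sup\{\lan \Tt(a,b),\To(s,t)\ran\mid \lan a,s\ran\le f^*,\ \lan b,t\ran\le g^*\}\\
&=\Tt\star\To(f^*,g^*).
\end{aligned}
\]
The last line is the definition of $\Tt\star\To$, with the roles of the first and second parameters interchanged. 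The zero-parameter terms cause no trouble: they contribute $\bot$ on both sides, and Step~1 already covers them.

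\emph{Consequence.} Since $^*$ is a bijective order isomorphism of $\I$, the displayed identity says precisely that $^*$ intertwines $\Tot$ with $\Tt\star\To$. This gives the isomorphism $(\I,\Tot)\cong(\I,\Tt\star\To)$.

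The only delicate point is the degenerate (zero) steps. There the parametrization is not unique, so both the formula $\lan r,a\ran^*=\lan a,r\ran$ and the equivalence in Step~2 must be verified uniformly over all parameters. I handle this in Step~1 by noting that both sides are $\bot$ whenever a parameter vanishes.
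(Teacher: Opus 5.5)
Your proposal is correct and follows the paper's proof essentially step for step. Both arguments compute $\langle r,a\rangle^*=\langle a,r\rangle$, transfer the admissibility condition $\langle s,a\rangle\le f\iff\langle a,s\rangle\le f^*$ through the order isomorphism, and push ${}^*$ through the defining supremum before re-indexing. Your explicit verification of the step-adjoint formula, including the degenerate steps, fills in a detail that the paper states without proof.
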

\begin{proof}
Notice that for every step function $\lan r,a\ran\in\CE$, one has
$$
\lan r,a\ran^*=\lan a,r\ran.
$$
Consequently, for every $f\in\I$,
$$
\lan r,a\ran\le f
\iff
\lan a,r\ran\le f^*.
$$

By Lemma~\ref{lem:adjoint}, the map $f\mapsto f^*$ is an order-preserving involution, hence an order isomorphism of the complete lattice $\I$. It therefore preserves all suprema. Hence 
\begin{align*}
(\Tot(f,g))^*
&=
\left(
\sup\bigl\{
\lan \To(r,s),\Tt(a,b)\ran
\mid
\lan r,a\ran\le f,\ \lan s,b\ran\le g
\bigr\}
\right)^*\\
&=
\sup\bigl\{
\lan \To(r,s),\Tt(a,b)\ran^*
\mid
\lan r,a\ran\le f,\ \lan s,b\ran\le g
\bigr\}\\
&=
\sup\bigl\{
\lan \Tt(a,b),\To(r,s)\ran
\mid
\lan r,a\ran\le f,\ \lan s,b\ran\le g
\bigr\}.
\end{align*} 
By the above equivalence,
$$
\lan r,a\ran\le f \iff \lan a,r\ran\le f^*,
\qquad
\lan s,b\ran\le g \iff \lan b,s\ran\le g^*.
$$
Therefore,
\begin{align*}
(\Tot(f,g))^*
&=
\sup\bigl\{
\lan \Tt(a,b),\To(r,s)\ran
\mid
\lan a,r\ran\le f^*,\ \lan b,s\ran\le g^*
\bigr\}\\
&=
\Tt\star\To(f^*,g^*).
\end{align*}
Thus,
$$
(\Tot(f,g))^*=\Tt\star\To(f^*,g^*).
$$
Thus $f\mapsto f^*$ yields the asserted isomorphism.
\end{proof}
\begin{prop}\label{tens B}
Let $\To$ and $\Tt$ be t-norms on $[0,1]$. Then, for all $f,g\in\I$ and all $x\in[0,1]$,
\begin{equation*}
\Tot(f,g)(x)=
\sup_{x<\To(s,t)} \Tt\bigl(f(s^-),g(t^-)\bigr).
\end{equation*}
\end{prop}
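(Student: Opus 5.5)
Since suprema in $\I$ are computed pointwise, I will evaluate the defining supremum of $\Tot(f,g)$ at a fixed $x\in[0,1]$ and then simplify the index set using Lemma~\ref{lem:step-supdense}(1).

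First, by the pointwise join formula and the definition of a step function,
\[
\Tot(f,g)(x)=\sup\bigl\{\lan \To(s,t),\Tt(a,b)\ran(x)\mid \lan s,a\ran\le f,\ \lan t,b\ran\le g\bigr\}.
\]
The term $\lan \To(s,t),\Tt(a,b)\ran(x)$ equals $\Tt(a,b)$ when $x<\To(s,t)$ and $0$ otherwise. The zero terms do not affect the supremum, since $\sup\emptyset=0$ and all values are nonnegative. Hence
\[
\Tot(f,g)(x)=\sup\{\Tt(a,b)\mid x<\To(s,t),\ \lan s,a\ran\le f,\ \lan t,b\ran\le g\}.
\]

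Second, by Lemma~\ref{lem:step-supdense}(1), the condition $\lan s,a\ran\le f$ is equivalent to $a\le f(s^-)$, and likewise $\lan t,b\ran\le g$ is equivalent to $b\le g(t^-)$. The constraint $x<\To(s,t)$ does not involve $a$ or $b$. So for each fixed pair $(s,t)$ with $x<\To(s,t)$, I take the inner supremum over admissible $a,b$. By monotonicity of $\Tt$ this inner supremum is attained at $a=f(s^-)$ and $b=g(t^-)$, and these values are admissible. It therefore equals $\Tt(f(s^-),g(t^-))$.

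Taking the outer supremum over all such $(s,t)$ gives the stated formula. No continuity of $\To$ or $\Tt$ is needed, because the maximum is attained exactly rather than approximated.

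The only point needing care is the degenerate cases, namely $s=0$ or $t=0$, where $f(0^-)=1$. The condition $x<\To(s,t)$ already excludes these: if $s=0$ then $\To(s,t)=0\le x$, so the inequality fails, and similarly for $t=0$. In all remaining cases the equivalence in Lemma~\ref{lem:step-supdense}(1) applies verbatim. When no pair $(s,t)$ satisfies $x<\To(s,t)$, as for $x=1$, both sides equal $\sup\emptyset=0$.
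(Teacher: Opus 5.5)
Your proof is correct and is essentially the paper's argument: both rely on Lemma~\ref{lem:step-supdense}(1) and the monotonicity of $\Tt$ to replace the admissible heights $a,b$ by $f(s^-),g(t^-)$, and then read off the value at $x$. The only differences are cosmetic: the paper does the replacement before evaluating, and it evaluates via the equivalence $c<\Tot(f,g)(x)\iff \exists\, s,t\colon x<\To(s,t),\ c<\Tt(f(s^-),g(t^-))$ rather than the pointwise join formula; also, your separate treatment of $s=0$ or $t=0$ is unnecessary, since Lemma~\ref{lem:step-supdense}(1) holds trivially there.
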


\begin{proof}
By the definition of $\Tot$ and Lemma \ref{lem:step-supdense}(1), we have
\begin{align*}
\Tot(f,g)&=\sup\{\lan \To(s,t),\Tt(a,b)\ran
\mid\lan s,a\ran\le f,\ \lan t,b\ran\le g\}\\
&=\sup\{\lan \To(s,t),\Tt(f(s^-),g(t^-))\ran\mid s,t \in[0,1]\}.
\end{align*} 
 
Now for all $x\in[0,1]$ and all $c\in[0,1]$, it holds that
\begin{align*}
 c<\Tot(f,g)(x)
&\iff \exists r,s\in[0,1], c<\lan \To(r,s),\Tt(f(r^-),g(s^-))\ran(x)\\
&\iff \exists r,s\in[0,1], x<\To(r,s), c<\Tt(f(r^-),g(s^-))\\
&\iff c<\sup_{x<\To(r,s)}\Tt(f(r^-),g(s^-)).
\end{align*}
Therefore, it follows that 
 $$\Tot(f,g)(x)=\sup_{x<\To(r,s)}\Tt(f(r^-),g(s^-))$$ for all $x\in[0,1]$.
\end{proof}

If both $\To$ and $\Tt$ are left-continuous t-norms, then both $([0,1],T_1)$ and $([0,1],T_2)$ are quantales. Recall from \cite{Eklund2018,Gutierrez2017,Gutierrez2019} that the monoidal operation $T_1\otimes T_2$ of the tensor product $([0,1],T_1)\otimes([0,1],T_2)$ is given by  
$$
\To\otimes\Tt(f,g)=\sup_{r,s\in[0,1]}\lan \To(r,s),\Tt(f(r),g(s))\ran.  
$$
Particularly, for each point $x\in[0,1]$, 
$$T_1\otimes T_2(f,g)(x)=\sup_{x<T_1(r,s)}T_2(f(r),g(s)).$$

\begin{thm}\label{tens prod} 
Let both $\To$ and $\Tt$ be left-continuous t-norms on $[0,1]$. Then it holds that $\Tot=T_1\otimes T_2$.
\end{thm}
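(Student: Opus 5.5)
Proposition~\ref{tens B} already gives the pointwise formula
\[
\Tot(f,g)(x)=\sup_{x<\To(s,t)}\Tt\bigl(f(s^-),g(t^-)\bigr),
\]
while the tensor product is given pointwise by
\[
\To\otimes\Tt(f,g)(x)=\sup_{x<\To(r,s)}\Tt\bigl(f(r),g(s)\bigr).
\]
So the plan is to fix $x\in[0,1]$ and prove the two suprema equal by proving both inequalities.

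The inequality $\To\otimes\Tt(f,g)(x)\le\Tot(f,g)(x)$ is immediate. Since $f$ is non-increasing, $f(r)\le f(r^-)$, and likewise $g(s)\le g(s^-)$. Monotonicity of $\Tt$ then bounds each term on the tensor side by the term with the same index on the $\star$ side, and the index sets coincide. This direction needs no continuity.

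For the reverse inequality, fix $s,t$ with $x<\To(s,t)$ and take any $c<\Tt(f(s^-),g(t^-))$. Note that $s,t>0$, since otherwise $\To(s,t)=0\le x$. I will use left-continuity twice.

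First, left-continuity of $\To$ gives $\To(s,t)=\sup_{s'<s,\,t'<t}\To(s',t')$. Hence there exist $s'<s$ and $t'<t$ with $x<\To(s',t')$. Any $s''\in[s',s)$ and $t''\in[t',t)$ still satisfy $x<\To(s'',t'')$.

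Second, left-continuity of $\Tt$, in the form $\Tt(a,b)=\sup_{a'<a,b'<b}\Tt(a',b')$, gives $a'<f(s^-)$ and $b'<g(t^-)$ with $c<\Tt(a',b')$. Because $f(s^-)=\inf_{u<s}f(u)$ is attained as a limit from the left, there is $s''\in[s',s)$ with $f(s'')>a'$. Similarly there is $t''\in[t',t)$ with $g(t'')>b'$. (When $f(s^-)=0$ there is no $a'$, but then $\Tt(f(s^-),g(t^-))=0$ and nothing needs proving.)

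With these choices, $x<\To(s'',t'')$ and $c<\Tt(a',b')\le\Tt(f(s''),g(t''))$. So $c$ is below a term of the tensor supremum. Letting $c$ increase to the target value gives $\Tot(f,g)(x)\le\To\otimes\Tt(f,g)(x)$.

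The main obstacle is coordinating these approximations. The perturbed points $s'',t''$ must lie simultaneously close enough to $s,t$ to keep $x<\To(s'',t'')$ (which uses left-continuity of $\To$) and far enough to the left that $f(s'')$ and $g(t'')$ exceed the chosen $a',b'$ (which uses the definition of left limits). Both constraints are of the form "sufficiently close from the left", so taking the later of the two thresholds resolves the conflict. Left-continuity of $\Tt$ is what lets us replace the strict inequality $c<\Tt(f(s^-),g(t^-))$ by one witnessed at strictly smaller arguments.
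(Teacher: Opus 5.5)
Your proof is correct and follows the paper's argument. The easy inequality comes from $f(r)\le f(r^-)$, and the reverse one uses left-continuity of $T_1$ to pass to $s'<s$, $t'<t$ with $x<T_1(s',t')$.

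The one difference is that your appeal to left-continuity of $T_2$, and the ``coordination'' obstacle you describe, are unnecessary. Since $f$ is non-increasing, $f(s')\ge f(s^-)$ and $g(t')\ge g(t^-)$ hold for every $s'<s$ and $t'<t$, directly from the definition of the left limits. Monotonicity of $T_2$ then already gives $T_2(f(s^-),g(t^-))\le T_2(f(s'),g(t'))$, so the paper's proof of this direction uses only left-continuity of $T_1$.
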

 
\begin{proof} 
On the one hand, by Proposition~\ref{tens B}, we obtain
$$
\Tot(f,g)(x)\geq\To\otimes\Tt(f,g)(x).
$$
On the other hand, for any $p\in[0,1]$, if $p<\Tot(f,g)(x)$, then there exists a pair $(r,s)$ with $\To(r,s)>x$ such that $p<\Tt(f(r^-),g(s^-))$. Since $\To$ is left-continuous, there is a pair $(r',s')$ satisfying $r'<r$, $s'<s$ and $x<\To(r',s')\leq\To(r,s)$. Thus,
$$
p<\Tt(f(r^-),g(s^-))\leq\Tt(f(r'),g(s'))\leq \sup_{x<T_1(r',s')}T_2(f(r'),g(s')) =\To\otimes\Tt(f,g)(x).
$$
Since $p$ is arbitrary, we conclude that
$$
\Tot(f,g)(x)\leq \To\otimes\Tt(f,g)(x).
$$
Therefore, we have shown that $\Tot=\To\otimes\Tt$, as desired.
\end{proof}

\begin{rem}\label{rem:comparison}
(1) Assume that both $\To$ and $\Tt$ are left-continuous.
Notice that the map $x\mapsto\mathrm e^{-x}$ gives an order
isomorphism from $([0,\infty],\geq)$ to $([0,1],\leq)$.
Hence $\To$ induces a right-continuous t-conorm $L$ on
$[0,\infty]$ by
\[
 L(x,y)=-\ln\bigl(\To(\mathrm e^{-x},\mathrm e^{-y})\bigr),
\]
where $-\ln0=\infty$ and $\mathrm e^{-\infty}=0$.
For $r\in[0,\infty]$ and $p\in[0,1]$, let $\eta_{r,p}$ denote
the step function written as $\varepsilon(r,p)$ in
\cite[Example~2.7]{Lai2026}; explicitly,
\[
 \eta_{r,p}(x)=
 \begin{cases}
  1,&x=\infty,\\
  p,&r<x<\infty,\\
  0,&0\le x\le r,\quad x<\infty.
 \end{cases}
\]
Recall from Remark~\ref{IMD}(2) that
$\Phi:\Dt\longrightarrow\I$ is the order isomorphism induced
by the logarithmic change of variables. Under this identification,
$\Phi(\eta_{r,p})=\langle\mathrm e^{-r},p\rangle$.
Therefore the tensor product $L\otimes\Tt$ on $\Dt$ is
transported to the tensor product $\To\otimes\Tt$ on $\I$;
namely,
\[
 \Phi\bigl((L\otimes\Tt)(F,G)\bigr)
 =(\To\otimes\Tt)\bigl(\Phi(F),\Phi(G)\bigr),
 \qquad F,G\in\Dt.
\]
Consequently, Theorem~\ref{tens prod} gives
$\Tot=\To\otimes\Tt\cong L\otimes\Tt$.
Thus, in the left-continuous case, our operation is precisely
the logarithmic counterpart of the tensor-product triangle
function investigated in \cite{Lai2026}.
If, in addition, $\To$ is continuous, then $L$ is continuous.
In this case, \cite[Theorem~3.18]{Lai2026} implies that
$\tau_{\Tt,L}=L\otimes\Tt$ if and only if $L$ satisfies
the conditionally strictly increasing property
\[
 x<x',\quad y<y',\quad L(x',y')<\infty
 \quad\Longrightarrow\quad L(x,y)<L(x',y')
 \qquad\text{(LCS)}
\]
for all $x,x',y,y'\in[0,\infty]$.
Under this condition, $\Tot$ also corresponds to the classical
Schweizer--Sklar triangle function $\tau_{\Tt,L}$.

(2) Recall from \cite{Harding2016} that the convolution of a
t-norm $T$ is the operation
$\mathcal T:\M\times\M\longrightarrow\M$ defined by
\[
 \mathcal T(f,g)(x)=\sup_{T(r,s)=x}\min\{f(r),g(s)\}.
\]
Let $T$ be continuous and satisfy
\[
 u<u',\quad v<v'\quad\Longrightarrow\quad
 T(u,v)<T(u',v')
 \qquad(u,u',v,v'\in[0,1]).
\]
Then $T\star T_M(f,g)=\mathcal T(f,g)$ for all $f,g\in\I$.
Indeed, Theorem~\ref{tens prod} gives
\[
 (T\star T_M)(f,g)(x)
 =\sup_{T(r,s)>x}\min\{f(r),g(s)\}.
\]
For $0\le x<1$, continuity of $T$ permits lowering a coordinate
until $T(r,s)=x$, proving that this supremum is at most
$\mathcal T(f,g)(x)$. Conversely, any pair with $T(r,s)=x$
and $r,s<1$ can be approached from above by pairs with
$T(r_n,s_n)>x$; right-continuity of $f,g$ gives the reverse
inequality. Pairs with $r=1$ or $s=1$ contribute zero, and at
$x=1$ both expressions vanish. Consequently, the convolution
leaves $\I$ invariant and coincides there with $T\star T_M$.
\end{rem}

In what follows, we discuss the Archimedean property of $\Tot$.

Let $T$ be a t-norm on $[0,1]$. For each $x\in[0,1]$, define
$x_1=x$ and $x_{n+1}=T(x_n,x)$ for all $n\geq1$.
Recall that $T$ is \emph{Archimedean} if, for all $x,y\in(0,1)$,
there exists $n\geq1$ such that $x_n<y$. Equivalently,
$\inf_{n\geq1}x_n=0$ for every $x\in(0,1)$; see
\cite{Klement2000}.
The definition of Archimedean property of t-norms  on  a  bounded poset were proposed by De Baets and Mesiar as follows. 

\begin{defn}\label{3.6s}\cite{DeBaets1999} A t-norm \(T\) on a bounded poset \((P,\le)\) is called Archimedean if for any \((x, y) \in P^2,\)
\[
(\forall n \ge 1) (x_n \geqslant y) \Rightarrow (x = \top \text{ or } y = \bot),
\]
where $x_1=x$ and $x_{n+1}=T(x_n,x)$ for all $n\geq1$.
\end{defn}

\begin{prop}\label{3.7s}\cite{DeBaets1999} A t-norm \(T\) on a complete lattice \(L\) is Archimedean if and only if
\[
(\forall x \in L \setminus \{\top\}) \left( \inf_{n \ge 1} x_n = \bot \right),
\] where $x_1=x$ and $x_{n+1}=T(x_n,x)$ for all $n\geq1$.
\end{prop}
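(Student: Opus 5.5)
The plan is simply to unfold Definition~\ref{3.6s} and use that, in a complete lattice, the infimum $\inf_{n\ge1}x_n$ always exists and is the greatest lower bound of the sequence $(x_n)_{n\ge1}$. No property of $T$ beyond the definition of the powers $x_n$ is needed, and I would prove the two implications separately.

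For the forward direction, assume $T$ is Archimedean and fix $x\in L\setminus\{\top\}$. I would set $y:=\inf_{n\ge1}x_n$, which exists because $L$ is complete. By construction $x_n\ge y$ for every $n\ge1$. The Archimedean condition then gives $x=\top$ or $y=\bot$. Since $x\ne\top$, we conclude $\inf_{n\ge1}x_n=y=\bot$.

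For the converse, assume $\inf_{n\ge1}x_n=\bot$ for every $x\neq\top$, and let $(x,y)\in L^2$ satisfy $x_n\ge y$ for all $n$. If $x=\top$ the required disjunction holds and we are done. Otherwise $y$ is a lower bound of $\{x_n\mid n\ge1\}$, so $y\le\inf_{n\ge1}x_n=\bot$, that is, $y=\bot$. Hence $T$ is Archimedean in the sense of Definition~\ref{3.6s}.

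I do not expect a genuine obstacle. The only point requiring care is the role of completeness: it guarantees that $\inf_{n}x_n$ exists in the forward direction, so that this infimum can serve as the witness $y$. In a merely bounded poset this step would fail, which is why the statement is restricted to complete lattices.
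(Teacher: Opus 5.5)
Your proof is correct. The paper gives no proof of its own here and simply cites De Baets and Mesiar; your direct unfolding of Definition~\ref{3.6s} in both directions is the standard argument.

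One small correction to your closing remark: completeness is not actually essential. In the forward direction you can apply the Archimedean condition to an arbitrary lower bound $y$ of $\{x_n\}$ rather than to the infimum. This shows that $\bot$ is the only lower bound, so $\inf_{n} x_n$ exists and equals $\bot$ in any bounded poset. The complete-lattice hypothesis only guarantees in advance that the infimum is defined.
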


We first establish a property of $\Tot$ concerning finite suprema.

\begin{lem}\label{lem:archnc-finite-joins}
Let $\To$ and $\Tt$ be t-norms on $[0,1]$. Then $\Tot$
preserves finite suprema in each argument. In particular, for all
$f,g,h\in\I$, it holds that
$$
 \Tot(\sup\{f,g\},h)
 =\sup\{\Tot(f,h),\Tot(g,h)\}.
$$
\end{lem}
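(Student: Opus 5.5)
By commutativity (Proposition~\ref{tens C}(1)) it suffices to treat the first argument. Finite suprema are either empty or binary. The empty case is the claim $\Tot(\bot,h)=\bot$. The binary case is the displayed identity.

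\emph{Empty supremum.} The only steps below $\bot=\lan 0,0\ran$ are those $\lan r,a\ran$ with $r=0$ or $a=0$. For such a step, $\To(0,t)=0$ or $\Tt(0,b)=0$, so every generating term $\lan\To(r,t),\Tt(a,b)\ran$ is the zero step. Hence $\Tot(\bot,h)=\bot$.

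\emph{Binary supremum, inequality $\ge$.} This is immediate from monotonicity, Proposition~\ref{tens C}(2).

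\emph{Binary supremum, inequality $\le$.} I would work with the defining supremum over steps. Let $\lan s,a\ran\le\sup\{f,g\}$ and $\lan t,b\ran\le h$. It suffices to show
\[
\lan\To(s,t),\Tt(a,b)\ran\le\sup\{\Tot(f,h),\Tot(g,h)\}.
\]
By Lemma~\ref{lem:step-supdense}(1), $a\le(\sup\{f,g\})(s^-)$. Suprema in $\I$ are pointwise, so $\sup\{f,g\}=\max\{f,g\}$. The key observation is then
\[
(\max\{f,g\})(s^-)=\inf_{x<s}\max\{f(x),g(x)\}=\max\{f(s^-),g(s^-)\}.
\]
This holds because both functions are non-increasing and we take a limit from the left. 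If $\max\{f(x),g(x)\}$ equals $f(x)$ for $x$ arbitrarily close to $s$ from the left, then the infimum equals $f(s^-)$ by monotonicity; otherwise it eventually equals $g(x)$ and the infimum is $g(s^-)$. More simply, a non-increasing $\phi$ satisfies $\phi(s^-)=\lim_{x\uparrow s}\phi(x)$ for $s>0$, and $\max$ is continuous. For $s=0$, both sides equal $1$.

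So $a\le f(s^-)$ or $a\le g(s^-)$, that is, $\lan s,a\ran\le f$ or $\lan s,a\ran\le g$. In the first case the term is a generator of $\Tot(f,h)$, and in the second a generator of $\Tot(g,h)$. This gives the bound.

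The only potentially delicate point is this left-limit identity for the pointwise maximum. It is where finiteness is used: for infinite suprema, left limits do not commute with sup, which is exactly the failure of left-continuity.
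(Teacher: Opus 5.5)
Your proof is correct and follows essentially the same route as the paper. Both arguments rest on the identity $(\sup\{f,g\})(s^-)=\max\{f(s^-),g(s^-)\}$, which with Lemma~\ref{lem:step-supdense}(1) shows that every step below $\sup\{f,g\}$ lies below $f$ or below $g$, together with commutativity and $\Tot(\bot,h)=\bot$; your separate $\ge$ and $\le$ inequalities and your explicit verification of $\Tot(\bot,h)=\bot$ simply spell out what the paper states tersely.
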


\begin{proof}
The supremum of $f$ and $g$ is calculated pointwise. Since both
functions are non-increasing, for every $r\in[0,1]$ we have
$$
 \bigl(\sup\{f,g\}\bigr)(r^-)
 =\max\{f(r^-),g(r^-)\}.
$$
Consequently, by \eqref{eq:step-order-char}, for every
$\lan r,a\ran\in\CE$,
\begin{align*}
 \lan r,a\ran\leq\sup\{f,g\}
 &\iff a\leq\max\{f(r^-),g(r^-)\}\\
 &\iff \lan r,a\ran\leq f\ \text{or}\ \lan r,a\ran\leq g.
\end{align*}
Substituting this equivalence into the definition of $\Tot$ in
terms of step functions gives the required equality.
Commutativity gives the same conclusion for the second argument.
Moreover, $\Tot(\bot,h)=\bot$, so the empty supremum is also
preserved.
\end{proof}

\begin{prop}\label{prop:archnc-inheritance}
Let $\To$ and $\Tt$ be Archimedean t-norms on $[0,1]$.
For each $f\in\I$, define
$$
 f_1=f,\qquad f_{n+1}=\Tot(f_n,f)\quad(n\geq1).
$$
Then, for every $f<\top$, it holds that
\begin{equation}\label{eq:archnc-order-limit}
 \inf_{n\geq1}f_n=\bot,
\end{equation}
where the infimum is taken in the complete lattice $\I$.
If $\Tot$ is associative, then it is an Archimedean t-norm on $\I$ .
\end{prop}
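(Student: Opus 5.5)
The plan is to bound $f$ above by a join of two steps, compute the iterates of that join explicitly using Lemma~\ref{lem:archnc-finite-joins}, and then show that any common lower bound of those iterates must vanish.

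\emph{Reduction to a join of two steps.} Since $f<\top=\lan 1,1\ran$, there is some $x_0<1$ with $c:=f(x_0)<1$. Put
\[
h=\sup\{\lan x_0,1\ran,\lan 1,c\ran\}.
\]
Then $f\le h$: on $[0,x_0)$ we have $h=1$, and on $[x_0,1)$ we have $h\ge c\ge f$ because $f$ is non-increasing. By Proposition~\ref{tens C}(2) and induction, $f_n\le h_n$ for all $n$, where $h_n$ denotes the iterates of $h$. So it suffices to show $\inf_n h_n=\bot$.

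\emph{Computing the iterates of $h$.} For $t\in[0,1]$ write $t^{(0)}=1$, $t^{(1)}=t$, and $t^{(k+1)}=T(t^{(k)},t)$ for the relevant t-norm. Apply Lemma~\ref{lem:archnc-finite-joins} repeatedly, together with the formula \eqref{times} on steps and commutativity. Each of the $n$ factors contributes either $\lan x_0,1\ran$ or $\lan 1,c\ran$, and the product of $k$ copies of the first with $n-k$ copies of the second is $\lan x_0^{(k)},c^{(n-k)}\ran$. This gives
\[
h_n=\max_{0\le k\le n}\lan x_0^{(k)},\,c^{(n-k)}\ran,
\]
where the powers of $x_0$ are taken in $\To$ and the powers of $c$ in $\Tt$. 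This expression stays correct if $x_0=0$ or $c=0$, since degenerate steps are simply $\bot$.

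\emph{Uniform bound.} Powers are non-increasing in the exponent, and for each $k$ either $k\ge m$ or $n-k\ge m$, where $m=\lceil n/2\rceil$. Hence
\[
h_n\le \sup\{\lan x_0^{(m)},1\ran,\ \lan 1,c^{(m)}\ran\}.
\]

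\emph{The infimum is $\bot$.} The infimum in $\I$ is not computed pointwise, so I argue through lower bounds. Let $g\in\I$ satisfy $g\le h_n$ for all $n$, and fix $x\in(0,1]$.
\begin{itemize}
\item Since $x_0<1$ and $\To$ is Archimedean, $x_0^{(m)}\le x$ for all large $m$. (This is immediate if $x_0=0$.)
\item For such $n$, the uniform bound gives $g(x)\le h_n(x)\le c^{(m)}$.
\item Since $c<1$ and $\Tt$ is Archimedean, $c^{(m)}\to 0$. Hence $g(x)=0$ for every $x>0$.
\item Right-continuity of $g$ then gives $g(0)=\sup_{x>0}g(x)=0$.
\end{itemize}
So $g=\bot$, and therefore $\inf_n f_n\le\inf_n h_n=\bot$.

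\emph{Conclusion.} If $\Tot$ is associative, then Proposition~\ref{tens C} supplies commutativity, monotonicity and the neutral element, so $\Tot$ is a t-norm on the complete lattice $\I$. Proposition~\ref{3.7s} combined with \eqref{eq:archnc-order-limit} then shows that it is Archimedean.

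The main obstacle is the second step, namely expanding $h_n$ correctly through finite-join preservation. The remaining delicate point is handling the non-pointwise infimum, which the lower-bound argument together with right-continuity resolves.
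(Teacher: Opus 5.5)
Your proof is correct and follows the paper's argument. Both dominate $f$ by a join of two steps, expand the iterates through finite-join preservation into $\sup_{k}\lan x_0^{(k)},c^{(n-k)}\ran$, bound this by the pair of steps at the half-way power, and use the Archimedean property of $\To$ and $\Tt$. The differences are cosmetic: you use $m=\lceil n/2\rceil$ for every $n$ instead of even indices, and you show that an arbitrary lower bound vanishes (pointwise on $(0,1]$, then at $0$ by right-continuity), whereas the paper places a nonzero step below the infimum and derives a contradiction.
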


\begin{proof}
Let $f<\top$. There exist $r,a\in(0,1)$ such that $f(r)\leq a$.
Set
$$
 g=\sup\{\lan r,1\ran,\lan1,a\ran\}.
$$
By the monotonicity of $f$ and the definition of step functions,
we have $f\leq g$.
Let $g_1=g$ and $g_{n+1}=\Tot(g_n,g)$ for all $n\geq1$.
Also, let $r_0=a_0=1$ and define
$$
 r_{n+1}=\To(r_n,r),\qquad
 a_{n+1}=\Tt(a_n,a)\quad(n\geq0).
$$
Since $\To$ and $\Tt$ are Archimedean, $r_n\to0$ and $a_n\to0$.

First, we show by induction that
\begin{equation}\label{eq:archnc-finite-expansion}
 g_n=\sup_{0\leq j\leq n}\lan r_j,a_{n-j}\ran
 \quad(n\geq1).
\end{equation}
For $n=1$, the equality follows from the definition of $g$.
Assume that it holds for some $n\geq1$.
By Lemma~\ref{lem:archnc-finite-joins} and the formula for
$\Tot$ on step functions, we obtain
\begin{align*}
 g_{n+1}
 &=\Tot\left(\sup_{0\leq j\leq n}\lan r_j,a_{n-j}\ran,
             \sup\{\lan r,1\ran,\lan1,a\ran\}\right)\\
 &=\sup_{0\leq j\leq n}
     \sup\{\lan r_{j+1},a_{n-j}\ran,
            \lan r_j,a_{n-j+1}\ran\}\\
 &=\sup_{0\leq j\leq n+1}\lan r_j,a_{n+1-j}\ran.
\end{align*}
Thus \eqref{eq:archnc-finite-expansion} holds for every $n\geq1$.

Second, the monotonicity of $\Tot$ gives $f_n\leq g_n$ for
all $n\geq1$. Since the sequences $\{r_n\}$ and $\{a_n\}$
are non-increasing, \eqref{eq:archnc-finite-expansion} yields
\begin{equation}\label{eq:archnc-even-bound}
 f_{2n}\leq g_{2n}
 \leq\sup\{\lan r_n,1\ran,\lan1,a_n\ran\}
 \quad(n\geq1).
\end{equation}
Indeed, for each $j\in\{0,\ldots,2n\}$, either $j\geq n$
or $2n-j\geq n$.

Suppose that $\inf_{k\geq1}f_k\neq\bot$. There exist $u,v\in(0,1]$
such that
$$
 \lan u,v\ran\leq \inf_{k\geq1}f_k.
$$
Choose $n$ such that $r_n<u$ and $a_n<v$, and take
$x\in(r_n,u)$. From \eqref{eq:archnc-even-bound}, we obtain
$$
 v=\lan u,v\ran(x)\leq \inf_{k\geq1}f_k(x)\leq f_{2n}(x)\leq a_n<v,
$$
a contradiction. This proves \eqref{eq:archnc-order-limit}.

If $\Tot$ is associative,  by Proposition \ref{tens C},   it is a t-norm on $\I$.  Hence, $\Tot$ is an Archimedean t-norm on $\I$ from  Proposition \ref{3.7s}.
\end{proof}

\begin{rem}\label{rem:archnc-finite-power}
When $\Tot$ is associative, the sequence $\{f_n\}$ in
Proposition~\ref{prop:archnc-inheritance} consists of the powers
of $f$ with respect to $\Tot$.
It should be pointed out that \eqref{eq:archnc-order-limit}
does not imply the stronger condition
$$
 \forall\,\bot<f,g<\top,\quad
 \exists n\geq1,\quad f_n\leq g.
$$
In fact, take $\To=\Tt=T_P$. Then $\Tot$ is a t-norm by
Theorem~\ref{tens prod}, and both $\To$ and $\Tt$ are
Archimedean. Let
$$
 f=\lan\tfrac12,1\ran,\qquad g=\lan\tfrac12,\tfrac12\ran.
$$
By the formula for $\Tot$ on step functions, we have
$$
 f_n=\lan2^{-n},1\ran\quad(n\geq1).
$$
For every $n\geq1$, taking $x=2^{-n-1}$ gives
$$
 f_n(x)=1>\tfrac12=g(x).
$$
Hence $f_n\nleq g$ for all $n\geq1$, even though
$\bot<g<f<\top$.
\end{rem}

\section{Associativity of the induced operation}
This section is devoted to the associativity of the induced operation \(T_1\star T_2\).
We first prove a general sufficient condition and then examine two complementary examples which show that the condition is meaningful but not necessary.

The proof of the main theorem relies on the following approximation lemma, which allows us to pass from the pointwise formula for \(T_1\star T_2\) to the associativity identity.

\begin{lem}\label{lem A}
Let $\To$ be a continuous t-norm on $[0,1]$. For all $f,g\in\I$ and all $x,y\in[0,1]$ with $x<y$, there exist $u_x,v_x\in(0,1]$ such that
$x<\To(u_x,v_x)$ 
and $\Tt(f(u_x^{-}),g(v_x^{-}))\ge\Tot(f,g)(y)$.
\end{lem}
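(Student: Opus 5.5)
The plan is to work with the pointwise formula of Proposition~\ref{tens B}, which gives
\[
 c:=\Tot(f,g)(y)=\sup_{y<\To(s,t)}\Tt\bigl(f(s^-),g(t^-)\bigr).
\]
The difficulty is that this supremum need not be attained. So I will use compactness of $[0,1]^2$ and the continuity of $\To$ to produce one pair $(u_x,v_x)$ that dominates every term of an approximating sequence. The gap between $x$ and $y$ supplies the slack needed to move slightly below the limit point.

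First, the trivial case. If $c=0$, take $u_x=v_x=1$. Then $\To(1,1)=1>x$, because $x<y\le1$, and the inequality $\Tt(\cdot,\cdot)\ge0$ holds trivially.

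Assume now $c>0$. Then the index set $\{(s,t):\To(s,t)>y\}$ is nonempty, since otherwise the supremum would be $0$. Choose pairs $(s_n,t_n)$ with $\To(s_n,t_n)>y$ and
\[
 \Tt\bigl(f(s_n^-),g(t_n^-)\bigr)\to c.
\]
By compactness, pass to a subsequence with $s_n\to s^*$ and $t_n\to t^*$. Continuity of $\To$ gives $\To(s^*,t^*)\ge y>x$. Since $\To(s^*,t^*)\le\min\{s^*,t^*\}$, this forces $s^*,t^*>0$.

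Next, I use continuity of $\To$ at $(s^*,t^*)$ from below. This lets me pick $u_x\in(0,s^*)$ and $v_x\in(0,t^*)$ with $\To(u_x,v_x)>x$. This is exactly where $x<y$ (rather than $x=y$) is used: the limit value may equal $y$, and we need strict room to step below it.

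Then, because $s_n\to s^*>u_x$ and $t_n\to t^*>v_x$, for all large $n$ we have $u_x<s_n$ and $v_x<t_n$. For such $n$, monotonicity of $f$ gives
\[
 f(u_x^-)\ge f(u_x)\ge \inf_{r<s_n}f(r)=f(s_n^-),
\]
and likewise $g(v_x^-)\ge g(t_n^-)$. By monotonicity of $\Tt$ (no continuity of $\Tt$ is needed),
\[
 \Tt\bigl(f(u_x^-),g(v_x^-)\bigr)\ge\Tt\bigl(f(s_n^-),g(t_n^-)\bigr)
\]
for all large $n$. Letting $n\to\infty$ yields $\Tt\bigl(f(u_x^-),g(v_x^-)\bigr)\ge c=\Tot(f,g)(y)$, as required.

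The main obstacle is the non-attainment of the supremum. I expect the compactness-plus-strict-slack argument above to handle it. The only delicate point is ensuring $u_x,v_x$ lie strictly below the limits $s^*,t^*$ while $\To(u_x,v_x)$ still exceeds $x$, and continuity of $\To$ gives precisely this.
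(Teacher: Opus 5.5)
Your proof is correct and follows essentially the same route as the paper's. Both arguments approximate the supremum by a sequence of admissible pairs, extract a convergent subsequence by compactness, use continuity of $T_1$ to step strictly below the limit point while keeping $T_1(u_x,v_x)>x$, and conclude by monotonicity of $f$, $g$ and $T_2$. The only cosmetic difference is that you split on $c=0$ rather than on $y=1$.
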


\begin{proof}
If $y=1$, then $\Tot(f,g)(y)=0$, and the conclusion follows by taking $u_x=v_x=1$. Henceforth assume $y<1$.

By Proposition \ref{tens B}, set $A:=\Tot(f,g)(y)=\sup_{y<\To(u,v)}\Tt(f(u^{-}),g(v^{-}))$. Then there is a sequence of pairs $(u_n,v_n)$ with $y<\To(u_n,v_n)$ for all $n$ such that 
$$
\Tt(f(u_n^{-}),g(v_n^{-}))\ra A\quad(n\ra\infty).
$$
Since $[0,1]^2$ is compact, after passing to a subsequence if necessary, we may assume  that $u_n\ra u_0$ and $v_n\ra v_0$.  Since $y<\To(u_n,v_n)\leq\min\{u_n,v_n\}$ for all $n$, it follows that
$u_0,v_0\in[y,1]$.

By the continuity of $\To$,
$$
\To(u_0,v_0)=\lim_{n\ra\infty}\To(u_n,v_n)\geq y>x.
$$

The continuity of $\To$ guarantees the existence of a small $0<\delta<\min\{u_0,v_0\}$ such that
$$
\To(u_0-\delta,v_0-\delta)>x.
$$
Set $u_x:=u_0-\delta$ and $v_x:=v_0-\delta$. For all sufficiently large $n$, we have $u_n>u_x$ and $v_n>v_x$. Because $f$ and $g$ are non-increasing,   $f(u_n^{-})\leq f(u_x^{-})$ and $g(v_n^{-})\leq g(v_x^{-})$. Since $\Tt$ is non-decreasing in each variable,
$$
\Tt(f(u_n^{-}),g(v_n^{-}))\le \Tt(f(u_x^{-}),g(v_x^{-})).
$$
Passing to the limit as $n\ra\infty$ on the left side yields $A\le \Tt(f(u_x^{-}),g(v_x^{-}))$, completing the proof.
\end{proof}

\begin{thm}\label{thm:continuous-component}
Suppose that at least one of $\To$ and $\Tt$ is a continuous t-norm on $[0,1]$. Then $\Tot$ is a t-norm on $\I$. 
\end{thm}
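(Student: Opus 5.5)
Commutativity, monotonicity and the neutral element $\lan 1,1\ran$ are already given by Proposition~\ref{tens C}, so only associativity (T3) remains. The plan has three steps: reduce to the case where $\To$ is continuous, express both bracketings of a triple product as one symmetric supremum, and prove the harder inequality using Lemma~\ref{lem A}.

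\emph{Reduction to continuous $\To$.} By Proposition~\ref{tens A}, $f\mapsto f^*$ is a bijection of $\I$ intertwining $\Tot$ with $\Tt\star\To$. Hence $\Tot$ is associative if and only if $\Tt\star\To$ is. Consequently, if $\Tt$ is the continuous one, I swap the roles of the two t-norms. So I assume from now on that $\To$ is continuous, while $\Tt$ is arbitrary and possibly not left-continuous.

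\emph{Symmetric triple expression.} For $f,g,h\in\I$ put
\[
 \Theta(f,g,h)=\sup\bigl\{\lan \To(\To(r,s),t),\Tt(\Tt(a,b),c)\ran \mid \lan r,a\ran\le f,\ \lan s,b\ran\le g,\ \lan t,c\ran\le h\bigr\}.
\]
Associativity and commutativity of $\To,\Tt$ make $\Theta$ symmetric in its three arguments. It therefore suffices to show $\Tot(\Tot(f,g),h)=\Theta(f,g,h)$. The other bracketing then follows by using commutativity of $\Tot$ (Proposition~\ref{tens C}(1)) and the symmetry of $\Theta$.

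The inequality $\ge$ is immediate. If $\lan r,a\ran\le f$ and $\lan s,b\ran\le g$, then $\lan \To(r,s),\Tt(a,b)\ran\le\Tot(f,g)$ by the definition of $\Tot$. So each term of $\Theta$ is an admissible term in the definition of $\Tot(\Tot(f,g),h)$.

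\emph{The inequality $\le$, the main step.} Here I must bound each term $\lan \To(w,t),\Tt(d,c)\ran$ with $\lan w,d\ran\le\Tot(f,g)$ and $\lan t,c\ran\le h$. By Lemma~\ref{lem:step-supdense}(1) the first condition means $d\le\Tot(f,g)(w^-)$. Hence $d\le\Tot(f,g)(y)$ for every $y<w$.

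Fix $x<w$ and choose $y$ with $x<y<w$. Lemma~\ref{lem A} supplies $u,v$ with $x<\To(u,v)$ and $\Tt(f(u^-),g(v^-))\ge d$. The steps $\lan u,f(u^-)\ran\le f$ and $\lan v,g(v^-)\ran\le g$ are admissible, so $\Theta$ dominates
\[
 \lan\To(\To(u,v),t),\Tt(\Tt(f(u^-),g(v^-)),c)\ran\ \ge\ \lan\To(x,t),\Tt(d,c)\ran .
\]
Taking the supremum over $x<w$ and using that $\eps$ preserves suprema in its first variable (Remark~\ref{rem:unibimor}) gives the step $\lan\sup_{x<w}\To(x,t),\Tt(d,c)\ran$. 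By continuity of $\To$ (left-continuity suffices here), this equals $\lan\To(w,t),\Tt(d,c)\ran$. The case $w=0$ gives $\bot$ and is trivial.

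The crux is this approximation step. Because $\Tt$ need not be left-continuous, the value $\Tot(f,g)(w^-)$ cannot be attained by passing limits through $\Tt$. Lemma~\ref{lem A} is what lets all limiting be done inside $\To$ while $\Tt$ is only evaluated at fixed arguments. I would double-check the edge cases $y=1$ and $w=0$ there.
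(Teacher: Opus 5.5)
Your proof is correct and follows essentially the same route as the paper: you reduce to continuous $\To$ via Proposition~\ref{tens A}, identify $\Tot(\Tot(f,g),h)$ with a triple supremum, and get the nontrivial inequality from Lemma~\ref{lem A}. The differences are presentational. Because you work with step functions rather than the pointwise formula of Proposition~\ref{tens B}, your easy inequality needs no continuity, whereas the paper uses continuity of $\To(-,w)$ there; and commutativity of $\Tot$ together with the symmetry of $\Theta$ replaces the paper's ``checked similarly'' for the other bracketing.
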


\begin{proof}
By Proposition \ref{tens C}, we need to show that $\Tot$ is associative. By Proposition \ref{tens A}, without loss of generality, we assume that $\To$ is continuous. 

First, we show that for all $f,g,h\in\I$ and $x\in[0,1]$,  
$$
\Tot\big(\Tot(f,g),h\big)(x)=\sup\{\Tt\bigl(\Tt(f(u^{-}),g(v^{-})),h(w^{-})\bigr)\mid{\To(\To(u,v),w)>x}\}.
$$
On one hand, take any $u,v,w\in[0,1]$ with $x<\To(\To(u,v),w)$. Because the map $\To(-,w)$ is continuous and $x < \To(\To(u,v),w)$, there exists  some $r\in[0,1]$ such that
$$r<\To(u,v) \text{ and } x<\To(r,w).$$
It follows that 
$$
\Tt(f(u^{-}),g(v^{-}))\le \sup_{r<\To(s,t)}\Tt(f(s^-),g(t^-))=\Tot(f,g)(r)\le\Tot(f,g)(r^{-})
$$
and 
\begin{align*}
\Tt\bigl(\Tt(f(u^{-}),g(v^{-})),h(w^{-})\bigr)
&\le \Tt\bigl(\Tot(f,g)(r^{-}),h(w^{-})\bigr)\\
&\le\sup_{x<\To(s,t)}\Tt(\Tot(f,g)(s^-),h(t^-))\\
&=\Tot(\Tot(f,g),h)(x).
\end{align*}
Taking the supremum over all admissible triples $(u,v,w)$ yields $$
\sup\{\Tt\bigl(\Tt(f(u^{-}),g(v^{-})),h(w^{-})\bigr)\mid{\To(\To(u,v),w)>x}\}\leq \Tot\big(\Tot(f,g),h\big)(x).
$$
On the other hand, we prove the inequality in the opposite direction, i.e.  $$\sup_{x<\To(r,w)}\Tt(\Tot(f,g)(r^-),h(w^-))\leq\sup\{\Tt\bigl(\Tt(f(u^{-}),g(v^{-})),h(w^{-})\bigr)\mid{\To(\To(u,v),w)>x}\}.
$$ It is sufficient to show that  for any $r,w\in[0,1]$ satisfying $x<\To(r,w)$,   there exist $u_r,v_r$ with $x<\To(\To(u_r,v_r),w)$ such that 
$$\Tot(f,g)(r^-)\leq\Tt(f(u_r^-),g(v_r^-)).$$ 
By the continuity of $\To$, there are real numbers $s_0,s_1\in[0,1]$ such that
$$
r>s_0>s_1 \text{ and } \To(r,w) > \To(s_0,w) > \To(s_1,w)>x.
$$
Since $\Tot(f,g)$ is non-increasing, we have
$$
\Tot(f,g)(r^{-})=\inf_{t<r}\Tot(f,g)(t)\le\Tot(f,g)(s_0).
$$
By Lemma \ref{lem A}, there exist $u_1,v_1\in[0,1]$ such that
$$
\To(u_1,v_1)>s_1 \text{ and } \Tot(f,g)(s_0)\leq\Tt(f(u_1^{-}),g(v_1^{-})).
$$
Now take $u_r=u_1$ and $v_r=v_1$. Then we obtain 
$$
\To(\To(u_r,v_r),w)\geq \To(s_1,w)>x
$$
and $$\Tot(f,g)(r^{-})\leq\Tt(f(u_r^{-}),g(v_r^{-})) $$ as desired. 
Therefore, we have shown that 
$$\Tot\big(\Tot(f,g),h\big)(x)
=\sup\{\Tt\bigl(\Tt(f(u^{-}),g(v^{-})),h(w^{-})\bigr)\mid{\To(\To(u,v),w)>x}\}.$$

Second, the equality 
$$\Tot\big(f,\Tot(g,h)\big)(x)
=\sup\{\Tt\bigl(f(u^{-}),\Tt(g(v^{-}),h(w^{-}))\bigr)\mid{\To(u,\To(v,w))>x}\}$$
can be checked similarly. 

Because both $\To$ and $\Tt$ are associative, we obtain the equality  $$\Tot\big(\Tot(f,g),h\big)=\Tot\big(f,\Tot(g,h)\big).$$
Thus, $\Tot$ is associative as desired. 
\end{proof}

\begin{thm}\label{thm:drastic-pair}
The operation $\TD\star\TD$ is a t-norm on the fuzzy unit interval $\I$.
\end{thm}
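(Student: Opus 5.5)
The plan is to compute $\TD\star\TD$ in closed form using Proposition~\ref{tens B}. By Proposition~\ref{tens C}, commutativity, monotonicity and the neutral element $\lan1,1\ran$ already hold, so only associativity remains to be shown. The explicit formula will show that any product of two non-top elements is nilpotent with respect to a third non-top element, which makes associativity immediate.

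\textbf{Step 1 (closed form).} Fix $f,g\in\I$. By Proposition~\ref{tens B},
\[
(\TD\star\TD)(f,g)(x)=\sup_{x<\TD(s,t)}\TD\bigl(f(s^-),g(t^-)\bigr).
\]
Since $\TD(s,t)>x\ge0$ forces $\max\{s,t\}=1$, the admissible pairs are $(1,t)$ with $t>x$ and $(s,1)$ with $s>x$. For a non-increasing $f$, note that $f(1^-)=1$ holds iff $f=1$ on $[0,1)$, i.e.\ iff $f=\top=\lan1,1\ran$.

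Next define $\alpha_f:=f(1^-)$ and $\rho_f:=\sup\{t\in[0,1]: f(t^-)=1\}$. This set contains $0$ because $f(0^-)=1$, and it is a closed down-set: if $f(s^-)=1$ for all $s<\rho$, then $f(\rho^-)=1$. Consequently there is some $t>x$ with $f(t^-)=1$ iff $x<\rho_f$. Moreover $\rho_f=1$ iff $f=\top$.

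For $f,g\ne\top$, we have $\TD(a,b)\neq0$ only when one argument equals $1$. Since $\alpha_f,\alpha_g<1$, the supremum splits into two pieces. The pairs $(1,t)$ contribute $\alpha_f$ exactly when $g(t^-)=1$ for some $t>x$. The pairs $(s,1)$ contribute symmetrically. Hence
\[
(\TD\star\TD)(f,g)=\sup\{\lan\rho_g,\alpha_f\ran,\lan\rho_f,\alpha_g\ran\}\qquad(f,g\ne\top).
\]

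\textbf{Step 2 (products are nilpotent).} Let $f,g\neq\top$ and put $h=(\TD\star\TD)(f,g)$. Since $\rho_f,\rho_g<1$, the function $h$ vanishes near $1$. Hence $\alpha_h=0$ and $h\neq\top$. Applying Step~1 to the pair $(h,k)$ with $k\ne\top$ gives
\[
(\TD\star\TD)(h,k)=\sup\{\lan\rho_k,\alpha_h\ran,\lan\rho_h,\alpha_k\ran\}=\sup\{\lan\rho_k,0\ran,\lan\rho_h,\alpha_k\ran\}.
\]
Here $\lan\rho_k,0\ran=\bot$, and $\rho_h=0$ because $h(t^-)\le\max\{\alpha_f,\alpha_g\}<1$ for every $t>0$, so $\lan\rho_h,\alpha_k\ran=\bot$ as well. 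Therefore every product of three non-top elements, bracketed either way, equals $\bot$.

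\textbf{Step 3 (associativity).} If none of $f,g,h$ is $\top$, both bracketings equal $\bot$ by Step~2, using commutativity for the right bracketing. If one of them is $\top$, then both sides reduce, by the unit law of Proposition~\ref{tens C}(3) together with commutativity, to the product of the remaining two elements.

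The main obstacle is Step~1: the bookkeeping at the boundary, namely the conventions $f(0^-)=1$, the closedness of the set defining $\rho_f$, and the identification $\rho_f=1\iff\alpha_f=1\iff f=\top$. Everything else is formal once the closed form is established.
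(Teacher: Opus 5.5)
Your proof is correct and follows the paper's route. Your closed form $\sup\{\lan\rho_g,\alpha_f\ran,\lan\rho_f,\alpha_g\ran\}$ is exactly the paper's $\sup\{\lan g^*(1^-),f(1^-)\ran,\lan f^*(1^-),g(1^-)\ran\}$, because $\rho_f=f^*(1^-)$, and both arguments then show that every product of three non-top elements is $\bot$. The differences are minor: you derive the closed form from the pointwise formula of Proposition~\ref{tens B} rather than from the step-function definition and adjoints, and you re-apply the closed form to $(h,k)$ instead of bounding $h$ by the single step $\lan w,c\ran$.
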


\begin{proof}
It suffices to verify associativity. If any of $f,g,h \in \I$ equals the neutral element $\langle 1,1 \rangle$, the identity
$$
\TD\star\TD(\TD\star\TD(f,g),h) = \TD\star\TD(f,\TD\star\TD(g,h))
$$
follows directly from Proposition~\ref{tens C}.

Assume now that none of $f,g,h$ is the neutral element. We first claim that

$$\label{TD}
\TD\star\TD(f,g) = \sup\{\langle f^*(1^-),g(1^-) \rangle,\; \langle g^*(1^-),f(1^-) \rangle\}.
$$

Recall that
$$
\TD\star\TD(f,g) = \sup\bigl\{\langle \TD(u,v),\TD(a,b) \rangle \mid \langle u,a \rangle \le f,\; \langle v,b \rangle \le g\bigr\},
$$
and note that $\langle \TD(u,v),\TD(a,b) \rangle \neq \langle 0,0 \rangle$ iff both $\TD(u,v) \neq 0$ and $\TD(a,b) \neq 0$.

\begin{enumerate}[(i)]
    \item If $u = 1$, then $\langle 1,a \rangle \le f$ implies $a \le f(1^-)< 1$. From $\TD(a,b) \neq 0$ we obtain $b = 1$. The condition $\langle v,b \rangle \le g$  implies $\langle b,v\rangle \le g^\ast$, and hence \(v\le g^*(b^-)=g^*(1^-)\). Consequently,
    $$
    \langle \TD(u,v),\TD(a,b) \rangle=\lan v,a\ran\le \langle g^*(1^-),f(1^-) \rangle.
    $$

    \item If $v = 1$, a symmetric argument yields
    $$
    \langle \TD(u,v),\TD(a,b) \rangle \le \langle f^*(1^-),g(1^-) \rangle.
    $$
\end{enumerate}
Therefore,
$$
\TD\star\TD(f,g) \le \sup\bigl\{\langle f^*(1^-),g(1^-) \rangle,\; \langle g^*(1^-),f(1^-) \rangle \bigr\}.
$$

Now, we prove  the reverse inequality.  By Lemma \ref{lem:step-supdense}(1),     $\langle 1,f^*(1^-)\rangle\le f^\ast$ and $\langle 1,g(1^-) \rangle \le g $.  Besides, $\langle 1,f^*(1^-)\rangle\leq f^*$ implies  $\langle f^*(1^-),1 \rangle \leq f$. Hence,
 \begin{align*}
\langle f^*(1^-), g(1^-)\rangle&= \langle \TD( f^*(1^-),1), \TD( 1,g(1^-)) \rangle \\
&=\TD\star\TD(\langle f^*(1^-),1 \rangle, \langle 1,g(1^-)  \rangle)   \quad\text{(by \eqref{times})}\\
&\le \TD\star\TD(f,g).\end{align*}
Similarly,
$$
\langle g^*(1^-),f(1^-) \rangle \leq \TD\star\TD(g,f)= \TD\star\TD(f,g),
$$
which establishes the claim.

Since $f \neq \langle 1,1 \rangle$ and $g \neq \langle 1,1 \rangle$,  $$c = \max\{ f(1^-),g(1^-)\} < 1.$$ Similarly,  $f^\ast \neq \langle 1,1 \rangle$ and $g^\ast \neq \langle 1,1 \rangle$ imply
$$w = \max\{f^*(1^-),g^*(1^-)\} < 1.  
$$
Then by the above claim,  we have $$\TD\star\TD(f,g) \le \langle w,c \rangle.$$
For any $\langle s,t \rangle \le h < \langle 1,1 \rangle$, either $s<1$ or $t<1$, thus
$$
\TD\star\TD\bigl(\langle w,c \rangle, \langle s,t \rangle\bigr) =\langle \TD(w,s), \TD(c,t) \rangle= \langle 0,0 \rangle.
$$
Therefore,
\begin{align*}
\TD\star\TD\bigl(\TD\star\TD(f,g),h\bigr) 
&\le \TD\star\TD\bigl(\langle w,c \rangle, h\bigr)\\
&=\sup\bigl\{\langle \TD(u,s), \TD(v,t) \rangle \mid \langle u,v\rangle \leq \langle w,c\rangle, \langle s,t\rangle\leq h \bigr\}\\
&=\sup\bigl\{\langle \TD(w,s), \TD(c,t) \rangle \mid  \langle s,t\rangle\leq h \bigr\}\\
&=\langle 0,0 \rangle.
\end{align*}
Analogously, $\TD\star\TD\bigl(f,\TD\star\TD(g,h)\bigr) = \langle 0,0 \rangle$. Consequently,
$$
\TD\star\TD\bigl(\TD\star\TD(f,g),h\bigr) = \TD\star\TD\bigl(f,\TD\star\TD(g,h)\bigr),
$$
which completes the proof of associativity.
\end{proof}

The following example shows that $\Tot$ need not be associative when neither component t-norm is continuous.

\begin{exmp}\label{ex:drastic-nilpotent} Neither the t-norm $\TD$ nor $\TNM$  is continuous, although $\TNM$ is left-continuous. We show that the operation  $\DN$ is not associative.  
 
Given any $a,b\in (0,1)$ with $a<b$, let   $f:[0,1]\to [0,1]$ and $h:[0,1]\to [0,1]$ be two strictly decreasing maps that are continuous on $[0,1)$, $f(0)=h(0)=1$, $f(1^-)= h(1^-)=0.1$, $f(1)=h(1)=0$,  and $f(a)=h(b)=0.9.$  Let the map $g:[0,1]\to[0,1]$ be given by $$ g(x)=\begin{cases} 
0.9, & x\in [0,1),\\ 0, &x=1.\end{cases}$$ Clearly, $f,g,h\in \I.$ We show that $$ \DN(\DN(f,g), h)\ne \DN(f,\DN(g,h)).$$

Note that \(x<\TD(u,v)\) if and only if  \(u=1, v>x\)  or \(v=1, u>x\). Hence for all $\lam,\mu\in\I$,  
$$
(\DN)(\lam,\mu)(x)=\sup_{x<\TD(u,v)}\TNM(\lam(u^-),\mu(v^-))
=\sup\left\{\sup_{v>x}\TNM(\lam(1^-), \mu(v^-)),\;\sup_{u>x}\TNM(\lam(u^-),\mu(1^-))\right\}.
$$
To compute $F:=\DN(f,g)$,  observe that  for each \(x\in [0,1]\),
$$\sup_{v>x}\TNM(f(1^-), g(v^-))=\sup_{v>x}\TNM(0.1, g(v^-))=0,$$
 and 
$$\sup_{u>x}\TNM(f(u^-), g(1^-))=\sup_{u>x}\TNM(f(u^-), 0.9)=\begin{cases}   0.9, & x\in  [0,a),\\ f(x), & x\in [a,1],\end{cases}$$
thus
$$F(x)=
\begin{cases}
0.9, &  x\in [0,a),\\
f(x), & x\in [a,1].
\end{cases}
$$
Similarly,  $H:=\DN(g,h)$ is obtained as follows:
$$
H(x)=\begin{cases}
0.9, &  x\in [0,b),\\
h(x), & x\in [b,1].
\end{cases}
$$

Next, we compute  $\DN(F,h)$. For any \(x\in [0,1]\), $$\sup_{u>x}\TNM(F(u^-), h(1^-))=\sup_{u>x}\TNM(F(u^-), 0.1)=0,$$ and  $$\sup_{v>x}\TNM(F(1^-), h(v^-))=\sup_{v>x}\TNM(0.1, h(v^-))=\begin{cases} 0.1, & x\in[0,b),\\    0, & x\in[b,1],\end{cases}$$ 
hence,  $$\DN(F,h)(x)=\begin{cases} 0.1, & x\in[0,b),\\    0, & x\in[b,1].\end{cases}$$
Similarly,  $$\DN(f,H)(x)=\begin{cases} 0.1, & x\in[0,a),\\    0, & x\in[a,1].\end{cases}$$
Therefore,  
$$\DN(\DN(f,g), h)\ne \DN(f,\DN(g,h)).$$

\end{exmp}

\section{Conclusion}
 
We have shown that for any two t-norms $T_1, T_2$ on $[0,1]$, the operation $T_1\star T_2$ defined on the fuzzy unit interval $\I$ is always commutative, monotone and has $\langle 1,1\rangle$ as its neutral element.
Associativity--and hence the property of being a t-norm--holds in the following cases:
\begin{enumerate}
    \item At least one of $T_1, T_2$ is continuous.
    \item Both $T_1$ and $T_2$ are left-continuous; in this case $T_1\star T_2$ is precisely the tensor product of the quantales $([0,1],T_1)$ and $([0,1],T_2)$.  
\end{enumerate}
The example $T_D\star T_D$ shows that neither continuity nor left-continuity is necessary.
On the other hand, the case of $\DN$ shows failure of associativity, showing that the interaction between a left-continuous t-norm and a non-left-continuous t-norm can destroy associativity. For the reader's convenience, we summarize the known cases established or recalled in this paper in the following table.
\begin{table}[htbp] 
\centering

\begin{tabular}{llc}
\toprule
$T_1$ & $T_2$ & Is $T_1\star T_2$ a t-norm? \\
\midrule
left-continuous & left-continuous 
& yes \\

continuous & arbitrary 
& yes \\

arbitrary & continuous 
& yes \\

$\TD$ & $\TD$ 
& yes \\

$\TD$ & $\TNM$
& no \\
\bottomrule
\end{tabular}
\end{table}

The case of two left-continuous components belongs to the existing quantale tensor-product theory. The new associativity result permits one component to be arbitrary when the other is continuous; the drastic pair shows that even two components failing left-continuity may work. The contrasting example shows why these results do not extend to arbitrary pairs.

The Archimedean discussion in Section~\ref{sec3} answers a separate question: two Archimedean components imply that the fixed recursive powers of each $f<\top$ have infimum $\bot$ in $\I$. If the induced operation is associative, this gives an Archimedean t-norm.

However, a complete characterization of those pairs $(T_1,T_2)$ for which $T_1\star T_2$ is a t-norm remains an open problem.

\section*{Acknowledgements}

Rongjun Wu would like to express gratitude to the Southwest Minzu University Research Startup Funds for Grant (\#RQD2021100), which supported the writing of this article. Jie Sun would like to express gratitude to the National Natural Science Foundation of China (No. 12601905) and the Southwest Minzu University Research Startup Funds for Grant (\#RQD2024011). Hongliang Lai would like to express gratitude to the National Natural Science Foundation of China (No. 12671089).


\begin{thebibliography}{99}

\bibitem{Alsina1980} C. Alsina, E. Trillas, L. Valverde, On non-distributive logical connectives for fuzzy sets theory, \emph{Bulletin pour les sous-ensembles flous et leurs applications} 3 (1980) 18--29.

\bibitem{Alsina2006} C. Alsina, M. Frank, B. Schweizer, \emph{Associative Functions: Triangular Norms and Copulas}, World Scientific Press, Singapore, 2006.

\bibitem{Banaschewski1976} B. Banaschewski, E. Nelson, Tensor products and bimorphisms, \emph{Canadian Mathematical Bulletin} 19 (1976) 385--402.

\bibitem{DeBaets1999}B. De Baets, R. Mesiar,  Triangular norms on product lattices, \emph{Fuzzy Sets and Systems} 104 (1999), 61--75.

\bibitem{Cooman1994} G. De Cooman, E. Kerre, Order norms on bounded partially ordered sets, \emph{The Journal of Fuzzy Mathematics} 2 (1994) 281--310.

\bibitem{Dubois1980} D. Dubois, H. Prade, New results about properties and semantics of fuzzy set-theoretic operators, in: P.P. Wang, S.K. Chang (Eds.), \emph{Fuzzy Sets, Theory and Applications to Policy Analysis and Information Systems}, Plenum Press, New York, 1980, pp. 59--75.

\bibitem{Eklund2018} P. Eklund, J. Guti\'errez Garc\'ia, U. H\"ohle, J. Kortelainen, \emph{Semigroups in Complete Lattices: Quantales, Modules and Related Topics}, Developments in Mathematics, Springer, Berlin, 2018.

\bibitem{Goguen1967} J. Goguen, L-fuzzy sets, \emph{Journal of Mathematical Analysis and Applications} 18 (1967) 145--174.

\bibitem{Gutierrez2017} J. Guti\'errez Garc\'ia, U. H\"ohle, T. Kubiak, Tensor products of complete lattices and their application in constructing quantales, \emph{Fuzzy Sets and Systems} 313 (2017) 43--60.

\bibitem{Gutierrez2019} J. Guti\'errez Garc\'ia, U. H\"ohle, T. Kubiak, Some characterizations of the tensor product of complete lattices with applications to quantales, \emph{Semigroup Forum} 99 (2019) 303--316.

\bibitem{Hajek1998} P. H\'ajek, \emph{Metamathematics of Fuzzy Logic}, Springer, Dordrecht, 1998.
\bibitem{Harding2016} J. Harding, C. Walker, E. Walker, \emph{The Truth Value Algebra of Type-2 Fuzzy Sets: Order Convolutions of Functions on the Unit Interval}, CRC Press, Taylor \& Francis Group, 2016.

\bibitem{Hutton1975} B. Hutton, Normality in fuzzy topological spaces, \emph{Journal of Mathematical Analysis and Applications} 50 (1975) 74--79.

\bibitem{Klement2000} E.P. Klement, R. Mesiar, E. Pap, \emph{Triangular Norms}, Kluwer Academic Publishers, Dordrecht, 2000.

\bibitem{Lai2026} H. Lai, Q. Luo, Triangle functions generated by tensor products of quantales, \emph{Fuzzy Sets and Systems} 524 (2026) 109655.


\bibitem{Prade1980} H. Prade, Unions et intersections d'ensembles flous, \emph{Bulletin pour les sous-ensembles flous et leurs applications} 3 (1980) 58--62.


\bibitem{Schweizer1963} B. Schweizer, A. Sklar, Associative functions and abstract semigroups, \emph{Publicationes Mathematicae Debrecen} 10 (1963) 69--81.


\bibitem{Schweizer2005} B. Schweizer, A. Sklar, \emph{Probabilistic Metric Spaces}, Dover Publications, Mineola, 2005.


\bibitem{Shmuely1974} Z. Shmuely, The structure of Galois connections, \emph{Pacific Journal of Mathematics} 54 (1974) 209--225.


\bibitem{Zadeh1975a} L. Zadeh, The concept of a linguistic variable and its application to approximate reasoning I, \emph{Information Sciences} 8 (1975) 199--249.


\bibitem{Zadeh1975b} L. Zadeh, The concept of a linguistic variable and its application to approximate reasoning II, \emph{Information Sciences} 8 (1975) 301--357.
\end{thebibliography}
\end{document}